\def\newaliasedtheorem#1[#2]#3{
  \newaliascnt{#1@alt}{#2}
  \newtheorem{#1}[#1@alt]{#3}
  \expandafter\newcommand\csname #1@altname\endcsname{#3}
}
\theoremstyle{plain}
\newtheorem{theorem}{Theorem}[section]
\theoremstyle{definition}
\theoremstyle{remark}
\numberwithin{equation}{section}
\def\R{\mathbb R}
\title[Dynamics for Dipolar BEC]{Dynamics of solutions to the Gross-Pitaevskii equation describing dipolar Bose-Einstein condensates} 
\author[J. Bellazzini \and L. Forcella]{Jacopo Bellazzini \and Luigi Forcella}
\address[J. Bellazzini]{Dipartimento di Matematica, Universit\`a Degli Studi di Pisa, Largo Bruno Pontecorvo, 5, 56127, Pisa, Italy}
\email{jacopo.bellazzini@unipi.it}
\address[L. Forcella]{Department of Mathematics, Heriot-Watt University, and The Maxwell Institute for the Mathematical Sciences, Edinburgh, EH14 4AS, United Kingdom}
\email{l.forcella@hw.ac.uk}
\subjclass[2000]{35Q55, 35B40, 82C10, 35J20}
\keywords{Gross-Pitaevskii equation, dipolar BEC, asymptotic dynamics, scattering, blow-up}
\begin{document}

\maketitle


\begin{abstract}
We review some recent results on the long time dynamics of solutions to the Gross-Pitaevskii equation (GPE) governing non-trapped dipolar Quantum Gases. We describe the asymptotic behaviours of solutions for different initial configurations of the initial datum in the energy space, specifically for data below, above, and at the Mass-Energy threshold.  We revisit some properties of powers of the Riesz transforms by means of the decay properties of the heat kernel associated to the parabolic biharmonic equation. These decay properties play a fundamental tool in establishing  the dynamical features of the solutions to the studied GPE. 
\end{abstract}

\section{Introduction} 
In this paper, we review some recent progresses concerning the dynamics of solutions to the following Gross-Pitaevskii equation (GPE) which models a so-called dipolar Bose-Einstein Condensate (BEC) at low temperatures, see \cite{AEMWC, BrSaToHu,  NaPeSa, BaCa, DMAVDKK, SSZL, LMS,PS,YY1,YY2}:
\begin{equation}
\label{eq:evolution}
i h \frac{\partial u}{\partial t} = - \frac{h^2}{2m}\Delta u + W(x) u + U_0|u|^2 u + (V_{dip}\ast |u|^2) u.
\end{equation}
In the equation above, $t$ is the time variable, $x = (x_1,x_2,x_3) \in \R^3 $ is the space variable, $\ast$ denotes the convolution, and $u=u(t,x)$ is a complex function.  The physical parameters appearing in \eqref{eq:evolution} are: the Planck constant $h$,  the mass $m$ of a dipolar particle, $U_0 = 4 \pi h^2 a_s /m$ describes the strength of the local interaction between dipoles in the condensate,  where  $a_s$ the $s-$wave scattering length, which may have positive or negative sign according to the repulsive/attractive nature of the interaction. The non-local, long-range dipolar interaction potential between two dipoles is given instead by the convolution through the potential 
\begin{equation*}
\label{eq:dipole}
V_{dip}(x) = \frac{\mu_0 \mu^2_{dip}}{4 \pi} \,  \frac{1 - 3 \cos^2 (\theta)}{|x|^3}, \quad x \in \R^3,
\end{equation*}
where $\mu_0$ is the vacuum magnetic permeability, $\mu_{dip}$ is the permanent magnetic dipole moment, and $\theta$ is the angle between the dipole axis and the vector $x$.  Without loss of generality, we can assume the dipole axis to be the vector $(0,0,1)$.   The potential $W(x)$ is an external trapping potential which will be not considered in the sequel, namely we study the case $W(x)=0$.\\ 

For a mathematical treatment of the equation above, we consider \eqref{eq:evolution} in its dimensionless form, and in particular we study the associated Cauchy problem in the energy space (i.e. $H^1(\R^3)$) as follows:
\begin{equation}\label{GP}
\left\{ \begin{aligned}
i\partial_{t}u+\frac12\Delta u&=\lambda_1|u|^{2}u+\lambda_2(K\ast|u|^2)u, \quad (t,x)\in \R\times 
\mathbb{R}^3\\
u(0,x)&=u_0(x) \in H^1(\mathbb R^3)
\end{aligned}\right.,
\end{equation}
where  the dipolar kernel $K$ is now given by 
\begin{equation}\label{eq:ker}
K(x)=\frac{x_1^2+x_2^2-2x_3^2}{|x|^5}.
\end{equation}
Provided we normalize the the wave function according to $\int_{\R^3} |u(x,t)|^2 dx = N$, whereas $N$ is the total number of dipolar particles in the dipolar BEC, then the two real coefficients  $\lambda_1$ and $\lambda_2$ are defined by 
$\lambda_1 = 4 \pi a_s N \sqrt{\frac mh}$, and $ \lambda_2 = \frac{N \mu_0 \mu_{dip}^2 }{4 \pi }\sqrt{\frac{m^3}{h^5}}$, and they are two physical parameters describing the strength of the nonlinearities involved in the equation, specifically the local one given by $|u|^2u$, and the non-local one given by $(K\ast |u|^2)u$, respectively.

At least formally, the solution $u(t)$ to \eqref{GP} preserve the  Mass and the Energy of the initial datum $u(0)=u_0$, specifically 
	\begin{equation}\label{eq:mass}
	M(u(t)) := \|u(t)\|^2_{L^2(\R^3)} = M(u(0)), 
	\end{equation}
	and
	\begin{equation}\label{eq:energy}
	E(u(t)) := \frac{1}{2}\int_{\R^3}|\nabla u(t)|^2 dx+\frac12  \int_{\R^3} \lambda_1 |u(t)|^4 + \lambda_2 (K\ast |u(t)|^2)|u(t)|^2 dx 
= E(u(0)),
	\end{equation}
	where 	$M(u(t)) $ and $E(u(t)) $ define the Mass and the Energy, respectively.
For later purpose, we introduce the notation  	
	\begin{equation*}\label{defi-H}
	H(f):= \|\nabla f\|^2_{L^2(\R^3)}
	\end{equation*}
	for the kinetic energy, and 
	\begin{equation*}\label{defi-N}
	P(f):= \int_{\R^3} \lambda_1 |f(x)|^4 + \lambda_2 (K\ast |f(x)|^2)|f(x)|^2 dx 
	\end{equation*}
for the potential energy; hence we rewrite  	
\[
E(u(t))=\frac{1}{2}\left(H(u(t))+P(u(t))\right).
\]	
Assuming a local-in-time existence theory for \eqref{GP} (which is guaranteed by the work of Carles, Markowich, and Sparber, see \cite{CMS}), and by assuming enough regularity of the solutions, the conservation  laws  \eqref{eq:mass} and \eqref{eq:energy} can be proved by a simple integration by parts; a rigorous justification in the energy space $H^1(\R^3)$ (note that in this Sobolev space the energy functional  is well-defined) can be done by an approximation argument. Besides the  functionals $E$, $H$, and $P$ above, we introduce the Pohozaev functional
\begin{equation}\label{def:G}
G(f):= H(f) +\frac{3}{2} P(f).
\end{equation}
It is worth observing that the functional $G$ is (up to a $1/4$ factor) the second derivative in time of the virial functional associated to \eqref{GP}, i.e. 
\[
G(u(t))= \frac14\frac{d^2}{dt^2} V(t),
\]
where $V(t):=V(u(t))$ stands for the variance at time $t$ of the mass density, namely 
\begin{align} \label{defi-V}
	V(t):= \int_{\R^3}|x|^2|u(t,x)|^2 dx.
\end{align}
Motivated by the definition of the functional $V$, we introduce the space of functions $\Sigma\subset H^1(\R^3)$ as  $\Sigma:= H^1(\R^3) \cap L^2(\R^3;|x|^2 dx).$ \medskip

Following the  work by Carles, Markowich, and Sparber \cite{CMS},   we  introduce the partition of the coordinate plane $(\lambda_1,\lambda_2)$ given by the two sets below:
\begin{equation}\label{UR}
UR:=\left\{ \begin{aligned}
\lambda_1-\frac{4\pi}{3}\lambda_2<0 &\quad \hbox{ if } \quad \lambda_2>0\\
\lambda_1+\frac{8\pi}{3}\lambda_2<0 & \quad\hbox{ if } \quad\lambda_2<0
\end{aligned}\right.,
\end{equation}
and its complementary set in $\mathbb R^2$, namely
\begin{equation}\label{SR}
SR:=\left\{ \begin{aligned}
\lambda_1-\frac{4\pi}{3}\lambda_2\geq 0 &\quad \hbox{ if } \quad \lambda_2>0\\
\lambda_1+\frac{8\pi}{3}\lambda_2\geq 0 & \quad\hbox{ if } \quad\lambda_2<0
\end{aligned}\right..
\end{equation}
The two sets above are called  \emph{Unstable Regime} (see \eqref{UR}) and   \emph{Stable Regime} (see \eqref{SR}), respectively. \\

The separation of the parameters $\lambda_1$ and $\lambda_2$ as in the regions \eqref{UR} and  \eqref{SR}, is crucial in establishing the dynamics of solutions to \eqref{GP}. Indeed,  there are two main differences when working in the Unstable Regime instead of  the Stable regime. Firstly, in \eqref{UR} the conservation of the energy does not imply a boundedness in the kinetic term, second the  solutions to the  stationary equation (see \eqref{ell-equ} below) associated to \eqref{GP} do exist. Hence, at least in a naive way, we can think to the Unstable/Stable Regimes as the analogous for the Gross-Pitaevskii equation \eqref{GP} of the focusing/defocusing characters for the usual cubic NLS equation. However, note that here it is improper to speak about defocusing/focusing character for \eqref{GP}, since even for two positive coefficients of the non-linear terms $0<\lambda_1<\frac{4\pi}{3}\lambda_2$ finite time blow-up solutions may come up. See \cite[Lemma 5.1]{CMS}, where negative energy solutions are constructed. We also mention here that in the Stable Regime, we proved in \cite{BF19}, that for any initial datum in $H^1(\R^3)$ the corresponding solution to \eqref{GP} is global in time and scatters.\\

Similarly to the classical NLS equation (and more in general to other dispersive PDEs), a fundamental tool towards a classification of Cauchy data $u_0\in H^1(\mathbb R^3)$ as in \eqref{GP} leading to global (and scattering) solutions versus blowing-up solutions, is given by means of quantities related to the solutions of the stationary equation associated to \eqref{GP}:
\begin{equation} \label{ell-equ}
-\frac{1}{2} \Delta Q_{\mu} + \mu Q_{\mu} + \lambda_1 |Q_{\mu}|^2 Q_{\mu} +\lambda_2 (K\ast |Q_{\mu}|^2) Q_{\mu}=0,\quad \mu>0.
\end{equation}
Notice that if  $Q_{\mu}$ solves \eqref{ell-equ} then $u(t,x):=e^{-i \mu t}Q_{\mu}(x)$ solves  \eqref{GP}. Moreover, by an elementary scaling argument,  $E(Q_\mu) M(Q_{\mu})=E(Q_1) M(Q_1)$ for all $\mu>0$. For sake of simplicity in the notation, we will call $Q$ the standing wave solutions with $\mu=1$.
In particular, some bounds for the product of the Mass and the Energy of an initial datum in terms of the Mass and Energy of solutions  $Q$ to \eqref{ell-equ} allow to determine wether a solution $u(t)$ to \eqref{GP} exists for all time and scatters, or formation of singularities in finite (or infinite) time may arise. Indeed, sufficient conditions on $u_0\in H^1(\R^3)$ for the scattering/blow-up scenario are given by the relations below:
	\begin{equation}\label{sc-reg}
	(SC):=\left\{
		\begin{aligned} 
		E(u_0)M(u_0) &< E(Q) M(Q) \\
		H(u_0) M(u_0)&< H(Q) M(Q)
		\end{aligned}
	\right.,
	\end{equation}
and 
	\begin{equation}\label{blow-reg}
	(BC):=\left\{
		\begin{aligned} 
		E(u_0)M(u_0) &< E(Q) M(Q) \\
		H(u_0) M(u_0)&> H(Q) M(Q)
		\end{aligned}
	\right.,
	\end{equation}
respectively.
The above conditions on initial data are referred as the Mass-Energy (of the initial datum) below the threshold, the latter given by the quantity  $E(Q) M(Q)$. \\

As mentioned above, in the Unstable Regime \eqref{UR}, existence of solutions to \eqref{ell-equ} do exist, and it was proved in two different papers by Antonelli and Sparber, see \cite{AS}, and later by the first author and Jeanjean, see \cite{BJ}, by employing two different methods. In the former work, existence of ground states (i.e. standing wave solutions that minimize the energy functional $E(u)$ among all the standing solutions  with prescribed mass) are proved by means of minimizing a Weinstein-type functional, while in the latter a geometrical approach is used, specifically by proving that the energy functional satisfied a mountain pass geometry.  As for the usual cubic NLS, it turns out that a ground state $Q$ related to the elliptic equation gives an optimizer for the Gagliardo-Nirenberg-type inequality 
\begin{align} \label{GN-ineq}
-P(f) \leq C_{GN} (H(f))^{\frac{3}{2}} (M(f))^{\frac{1}{2}},
\end{align}
for $ f \in H^1(\R^3)$, meaning that $C_{GN} = -P(u_0)/ (H(Q))^{\frac{3}{2}} (M(Q))^{\frac{1}{2}}$.  Furthermore, the Pohozaev identities tell us that $H(Q) = 6 M(Q)=-\frac{3}{2} P(Q)$, and by the latter relations we have that $E(Q) = \frac{1}{6}H(Q) = -\frac{1}{4}P(Q)$ and that  
	\begin{align} \label{inde-quant-proof}
	E(Q) M(Q) = \frac{1}{6} H(Q) M(Q) = -\frac{1}{4} P(Q) M(Q) = \frac{2}{27} (C_{GN})^{-2}.
	\end{align}
It is important to remark that uniqueness of ground states -- even up to the action of some symmetry -- is unknown; nonetheless, by \eqref{inde-quant-proof} we can see that the quantities $E(Q) M(Q)$, $H(Q) M(Q) $, and $P(Q) M(Q) $ are independent of the choice  of the ground state.\\

In the paper, we will also give dynamics results for solutions with arbitrarily large initial data (although by imposing some other hypothesis on $u_0$ and/or by further restricting the conditions on the parameters $\lambda_1$ and $\lambda_2$ to a subset of the Unstable Regime), hence by considering data such that $E(u_0)M(u_0) >E(Q) M(Q)$, and for data exactly at the threshold, i.e. for data satisfying $E(u_0)M(u_0) = E(Q) M(Q)$. See the next subsection, where we enunciate the main results on the dynamics of solutions to \eqref{GP}.

\subsection{Main results} We conclude the Introduction by stating the main results contained in the paper. We separate them according the fact that the initial data are below, above, or at the threshold determined by $E(Q) M(Q)$.
\subsubsection{Dynamics below the threshold}  We start by giving the scattering Theorem and the blow-up in finite time Theorem, for solutions to \eqref{GP} arising from initial data below the Mass-Energy threshold, described in terms of a solution $Q$ of the elliptic equation \eqref{ell-equ}.   In what follows, $ e^{it\frac{1}{2}\Delta}$ denotes the linear Schr\"odinger  propagator, namely $v(t,x)=e^{it\frac{1}{2}\Delta}v_0$ solves $i\partial_t v+\frac12\Delta v=0$, with $v(0,x)=v_0$.
As already mentioned above, local well-posedness for \eqref{GP} was established in \cite{CMS}, by a usual fixed point argument based on Strichartz spaces, and upon having established some basic properties on the convolution kernel $K$, see \autoref{prop:cont-K} and \autoref{lemma:fou-k} below.  In what follows, with denote by $T_{min}>0$ and $T_{max}>0$ the minimal and the maximal times of existence of a solution to \eqref{GP}.  

The asymptotic dynamics for data below the threshold has been proved by the authors in \cite{BF19} and \cite{BF21}. In \cite{BF19}, we proved the following.
\begin{theorem}\label{theo-scat-BF}
		Let $\lambda_1$ and $\lambda_2$ satisfy \eqref{UR}, namely they belong to the Unstable Regime. Let $u_0 \in H^1(\R^3)$ satisfy \eqref{sc-reg}, where $Q$ is a ground state related to \eqref{ell-equ}. Then the corresponding solution $u(t)$ to \eqref{GP} exists globally in time and scatters in $H^1(\R^3)$ in both directions, that is, there exist $u^\pm_0 \in H^1(\R^3)$ such that
\begin{equation*} \label{defi-scat}
\lim_{t\rightarrow \pm \infty} \|u(t)- e^{it\frac{1}{2}\Delta} u^\pm_0\|_{H^1(\R^3)} =0.
\end{equation*}
\end{theorem}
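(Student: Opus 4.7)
The plan is to follow the concentration-compactness/rigidity scheme of Kenig-Merle, streamlined via the Dodson-Murphy approach for the focusing cubic nonlinear Schr\"odinger equation in $\R^3$, and to adapt it to the non-local dipolar nonlinearity. Three ingredients are needed: a variational trapping producing a uniform $H^1$ bound together with a coercivity estimate for $G$; a Strichartz-based small-data/perturbation theory built on the $L^p$-boundedness of $f \mapsto K \ast f$; and a Tao-type scattering criterion closed via a localized Morawetz/virial estimate.

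For the variational step, I would combine the Gagliardo-Nirenberg inequality \eqref{GN-ineq} with the identity \eqref{inde-quant-proof} to show that $y(t) := H(u(t)) M(u_0)$ satisfies $y(t) - C_{GN}\, y(t)^{3/2} \leq 2 E(u_0) M(u_0)$. The cubic $f(y) = y - C_{GN}\, y^{3/2}$ attains its maximum $2 E(Q) M(Q)$ precisely at $y_* = H(Q) M(Q)$; thus under \eqref{sc-reg} the value $y(0)$ lies strictly to the left of $y_*$, and by continuity $y(t)$ is confined to a compact subset of $[0, y_*)$ throughout the maximal existence interval. This immediately gives global well-posedness and, by a quantitative gap argument, yields $G(u(t)) \geq \delta\, \|\nabla u(t)\|_{L^2}^2$ for some $\delta>0$ independent of $t$.

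Second, I would exploit the Fourier description of $K$ recalled in \autoref{prop:cont-K} and \autoref{lemma:fou-k}: $\widehat K$ is bounded and homogeneous of degree zero, so convolution with $K$ is a Calder\'on-Zygmund operator bounded on $L^p(\R^3)$ for every $1<p<\infty$. Consequently at the Strichartz level the non-local nonlinearity behaves just like the local cubic one, and the usual perturbation theory produces a scattering criterion, e.g.\ $\|u\|_{L^5_{t,x}(\R\times\R^3)}<\infty$. To establish this bound I would argue by contradiction via a Tao-type criterion: were scattering to fail, there would exist $\eta>0$, $R>0$, translates $x_n$, and times $t_n\to\i$ with $\int_{|x-x_n|\leq R}|u(t_n,x)|^2\,dx \geq \eta$. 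Truncating the virial identity $\frac{d^2}{dt^2}V(t) = 4 G(u(t))$ by a radial cutoff $\chi_R$ and averaging in time over an interval of length comparable to $R$, the coercivity of $G$ forces a bound of order $R$ on $\int\!\!\int |\nabla u|^2 \chi_R\,dx\,dt$, which contradicts the persistence of mass in a fixed ball; backward scattering then follows by time-reversal symmetry of \eqref{GP}.

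The main technical obstacle is controlling the error produced by the cutoff $\chi_R$ on the non-local term $\lambda_2 \int (K\ast|u|^2)|u|^2\,dx$ inside $G(u(t))$: since $K$ is not integrable, a naive truncation leaves tail contributions that do not vanish as $R\to\i$. I would manage this by splitting $K$ into near- and far-field pieces and estimating the far-field interaction via the quantitative decay bounds on the powers of the Riesz transforms based on the parabolic biharmonic heat kernel, as advertised in the abstract. These decay estimates provide exactly the off-diagonal smallness needed to absorb the cutoff error and close the localized virial bound, thereby establishing scattering in both directions.
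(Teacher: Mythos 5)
Your variational step (coercivity of $G$ via \eqref{GN-ineq} and \eqref{inde-quant-proof}), your use of the Calder\'on--Zygmund boundedness of $f\mapsto K\ast f$ to run a Strichartz perturbation theory, and your identification of the Riesz-transform decay estimates as the device for absorbing the cutoff error in the non-local term all agree with what the paper does. The genuine divergence, and the gap, is in the rigidity step. You replace the Kenig--Merle construction of a minimal non-scattering critical element with a Tao/Dodson--Murphy-type scattering criterion, but as stated this does not close in the non-radial setting. Your contradiction hypothesis produces mass concentration along \emph{moving} translates $x_n$, while the localized virial you then invoke is truncated on a \emph{fixed} ball; these are compatible only if you also control the growth of $|x_n|$ in $n$ (equivalently, of a concentration path $x(t)$ in $t$). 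If $|x(t)|$ is allowed to grow linearly, the integrated virial inequality gives $c(T_1-T_0)\lesssim R$, but to make the cutoff errors uniformly small you must also take $R\gtrsim\sup_{[T_0,T_1]}|x(t)|$, and then the two bounds are consistent for all $T_1$ and no contradiction results. Nothing in your sketch rules this out.

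The paper closes this exact gap. The linear profile decomposition together with the wave operator produces a critical solution $u_{crit}$ whose orbit $\{u_{crit}(t,\cdot+x(t))\}_{t\in\R^+}$ is precompact in $H^1$, and a Galilean-boost argument in the spirit of Duyckaerts--Holmer--Roudenko \cite{DHR} shows $|x(t)|=o(t)$. The sub-linear growth is precisely what makes $c(T_1-T_0)\leq R$ and $R=R_\varepsilon+\tfrac{c}{2}T_1\geq \sup_{[T_0,T_1]}|x(t)|+R_\varepsilon$ mutually incompatible as $T_1\to\infty$. To repair your proposal you must either reinstate the profile-decomposition construction of a precompact critical element (or the non-radial Dodson--Murphy substitute), or supply a Galilean-invariance argument forcing the concentration path to have zero asymptotic velocity; the Tao-type criterion plus a localized virial on a fixed ball does not, by itself, yield \autoref{theo-scat-BF}.
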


\noindent The theorem above is given by implementing a concentration/compactness  and rigidity scheme, as we will explain in the next subsections.	\\

In order to give the blow-up results that we proved in \cite{BF21}, let us define by $\bar x=(x_1 ,x_2 )$, and let us introduce the functional space where the occurrence of formation of singularities in finite time is established: 
\begin{equation*}
\Sigma_3 =\left\{ u \in H^1(\R^3)\quad s.t. \quad u(x)=u(|\bar x|, x_3) \ \hbox{ and } \ u\in L^2(\R^3;x_3^2\,dx) \right\}.
\end{equation*}
$\Sigma_3$ is therefore the space of cylindrical symmetric functions with finite variance in the $x_3$ direction. We have the following.

\begin{theorem}\label{thm:main}
Assume that $\lambda_1$ and $ \lambda_2$ satisfy \eqref{UR}, namely they belong to the Unstable Regime. Let $u(t)\in \Sigma_3$ be a solution to \eqref{GP} defined on $(-T_{min}, T_{max}),$ with initial datum $u_0$ satisfying  \eqref{sc-reg}, where $Q$ is a ground state related to \eqref{ell-equ}. Then $T_{min}$ and $T_{max}$ are finite, namely $u(t)$ blows-up in finite time. 
\end{theorem}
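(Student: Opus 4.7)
I would combine a localized Glassey-type virial identity, tailored to the cylindrical geometry forced on us by $\Sigma_3$, with a variational bootstrap that pins the solution inside a region where the Pohozaev functional $G$ is negatively separated from zero. Recalling that $G(u(t))=\frac14\frac{d^2}{dt^2}V(t)$ formally, the target is to produce a localized analogue $V_R(t)$ with strictly concave behaviour, and then conclude in the usual way.

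\textbf{Step 1 (coercivity of $G$).} Since $M$ and $E$ are conserved and the quantities $E(Q)M(Q)=\frac{2}{27}C_{GN}^{-2}$ and $H(Q)M(Q)=\frac{4}{9}C_{GN}^{-2}$ are respectively the maximum value and the critical point of the one-variable function $z\mapsto \frac12 z-\frac12 C_{GN}\, z^{3/2}$, the hypotheses \eqref{sc-reg} and \eqref{blow-reg} combined trap $H(u(t))M(u(t))$ strictly above $H(Q)M(Q)$ throughout the life-span of $u$. A quantitative version of this comparison, standard via \eqref{GN-ineq} and the identities \eqref{inde-quant-proof}, yields $\delta>0$ depending only on $u_0$ and $Q$ such that
\[ G(u(t))\le -\delta\qquad\forall\, t\in(-T_{min},T_{max}). \]

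\textbf{Step 2 (localized virial).} Because in general $u_0\notin\Sigma$, I would replace the classical variance by
\[ V_R(t):=\int_{\R^3}\phi_R(x)\,|u(t,x)|^2\,dx,\qquad \phi_R(x):=R^2\psi\!\left(\frac{|\bar x|}{R}\right)+x_3^2, \]
with $\psi\in C^\infty([0,\infty))$ satisfying $\psi(r)=r^2$ for $r\le 1$, $\psi''(r)\le 2$ everywhere, and bounded at infinity. Since $u(t)\in\Sigma_3$, $V_R(t)$ is well defined and finite. Differentiating twice along \eqref{GP} should yield
\[ V_R''(t)=4\,G(u(t))+\mathcal E_R(u(t)), \]
where $\mathcal E_R$ collects (i) a kinetic error supported on $\{|\bar x|\ge R\}$, (ii) an error from the local cubic nonlinearity of the same type, and (iii) a nonlocal error coming from the failure of $\phi_R$ to satisfy $\nabla\phi_R\cdot x\equiv 2|x|^2$ outside the cylinder $\{|\bar x|\le R\}$.

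\textbf{Step 3 (error control and conclusion).} The cylindrical symmetry built into $\Sigma_3$ gives a two-dimensional Strauss-type decay in the $\bar x$ variable, which together with the conservation of mass and energy controls (i) and (ii) by a quantity that tends to $0$ as $R\to\infty$, uniformly in $t$. For (iii) I would use the Riesz-transform representation of $K\ast\,\cdot$ announced in \autoref{lemma:fou-k}, so that the nonlocal error rewrites as a bilinear pairing between $|u|^2$ and a Calder\'on--Zygmund operator applied to a piece supported on $\{|\bar x|\ge R\}$; the $L^p$ boundedness of the Riesz transforms combined with the cylindrical decay should deliver the same $o_R(1)$ bound. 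Choosing $R$ large enough to enforce $|\mathcal E_R(u(t))|\le 2\delta$ uniformly, one obtains $V_R''(t)\le -2\delta$ throughout the life-span, and since $V_R(t)\ge 0$, two integrations in time force both $T_{min}$ and $T_{max}$ to be finite, which is the claim. The genuine obstacle I expect is contribution (iii): the non-locality of $K\ast\,\cdot$ means that cutting off $|u|^2$ away from the dipole axis does not cut off $K\ast|u|^2$, and one has to extract quantitative off-diagonal smallness from the Riesz structure, together with a second use of the cylindrical symmetry, in order not to destroy the $R$-smallness on which the rest of the argument depends.
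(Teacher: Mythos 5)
Your high-level architecture matches the paper: a coercivity bound $G(u(t))\le -\delta$ obtained from a variational comparison with $Q$ and conservation of mass and energy, a localized variance with weight $R^2\psi(|\bar x|/R)+x_3^2$ adapted to $\Sigma_3$, a cylindrical Strauss-type decay to control the localized kinetic and local quartic errors, and a Glassey concavity argument. (You should write the hypothesis as \eqref{blow-reg} rather than ``$\eqref{sc-reg}$ and $\eqref{blow-reg}$ combined'' --- the two are mutually exclusive --- though it seems you do intend the condition $H(u_0)M(u_0)>H(Q)M(Q)$ together with $E(u_0)M(u_0)<E(Q)M(Q)$, which is indeed what propagates to $G(u(t))\le -\delta$.)

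The genuine gap is exactly where you flag it in the last sentence, and it is not resolved by what you propose. Invoking ``$L^p$ boundedness of the Riesz transforms combined with the cylindrical decay'' cannot yield the required $o_R(1)$ bound on the non-local error: $K\ast\cdot$ is an order-zero Calder\'on--Zygmund operator, so boundedness on $L^p$ gives you no smallness whatsoever when you merely split $|u|^2$ into pieces supported inside and outside a cylinder. What is actually needed --- and what the paper devotes Section~2 to --- are quantitative off-diagonal decay estimates for powers of Riesz transforms tested against functions with $R$-separated cylindrical supports, of the schematic form $|\langle \mathcal R_3^4 f,g\rangle|\lesssim R^{-1}\|f\|_{L^1}\|g\|_{L^1}$ (see \autoref{lemma:decay-r4}, \autoref{lemma:decay-r2-2}) and pointwise analogues for $\mathcal R_j^2$ (\autoref{lemma:in-out}, \autoref{lemma:out-in}), which the paper obtains via a representation of $\mathcal R_j^4$ through the heat kernel of $\partial_t w+\Delta^2 w=0$. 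Moreover, before these estimates even become applicable, one needs two algebraic reductions that your sketch omits entirely: first, the homogeneity identity $\xi\cdot\nabla_\xi\hat K=0$, equivalently $2\int x\cdot\nabla(K\ast f)\,f\,dx=-3\int(K\ast f)\,f\,dx$, which is what produces the correct Pohozaev coefficient $6\lambda_2\int(K\ast|u|^2)|u|^2$ inside $V_R''$ and hence the full $4G(u(t))$; and second, the explicit Fourier computation $\xi_3\partial_{\xi_3}\hat K=8\pi\bigl(\widehat{\mathcal R_3^2}-\widehat{\mathcal R_3^4}\bigr)$, which is what turns the remaining anisotropic error into the specific bilinear Riesz-transform pairings that the decay lemmas handle. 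Without these two identities and without the off-diagonal estimates (as opposed to mere $L^p$ bounds), Step~3 does not close, and the proof is incomplete precisely at the step you yourself identified as the genuine obstacle.
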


It is worth mentioning that both for the scattering and the blow-up result, the main difficulty with respect to other NLS non-local models, is the precise structure of the dipolar kernel. Moreover, no radial symmetry for the solutions can be assumed in our context, as the convolution with radial  function  would make disappear the contribution of the non-local term, hence reducing the equation to a standard cubic NLS. Thus, the blow-up result above for cylindrical symmetric solution is somehow the best one may obtain; let us recall that finite time blow-up without assuming any structure on the solutions is still unknown even for the usual focusing cubic NLS equation. Moreover, we point-out that the dipolar kernel $K$ enjoys a cylindrical symmetry, so our assumption is also physically consistent.\\

As said above, similarly to the classical cubic focusing NLS, if we do not assume any additional hypothesis on the initial datum, as in \autoref{thm:main} for example, we cannot prove that the solutions blow-up in finite time. Nonetheless, in \cite{DFH}, Dinh, Hajaiej, and the second author proved the following.  
	\begin{theorem} \label{theo-blow-crite}
		Let $\lambda_1$ and $\lambda_2$ satisfy \eqref{UR}. Let $u(t)$ be a solution to \eqref{GP}, defined on the maximal forward time interval $[0,T_{max})$. Assume that there exists a positive constant  $\delta>0$ such that 
\begin{equation} \label{blow-crite}
\sup_{t\in [0,T_{max})} G(u(t)) \leq -\delta.
\end{equation}
Then either the maximal forward time $T_{max}<\infty$, or $T_{max}=\infty$ and there exists a diverging sequence of times, say $t_n \rightarrow \infty$ as $n\to\infty$, such that $ \lim_{n\to\infty}\| u(t_n)\|_{\dot H^1(\R^3)}= \infty $.  In the latter case we say that the solution grows-up.
\end{theorem}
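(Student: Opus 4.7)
The plan is to argue by contradiction via a localized virial. Assume that $T_{\max}=\infty$ and, in view of the desired conclusion, that $\sup_{t\geq 0}\|u(t)\|_{\dot H^1(\R^3)}=:M<\infty$. Combined with mass conservation this gives a uniform $H^1$-bound on the trajectory, which I will exploit to force a suitably truncated variance to go to $-\infty$, contradicting its nonnegativity. The stated dichotomy is then obtained by contraposition.

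I would introduce a smooth radial cutoff $\varphi_R\in C^\infty(\R^3)$ satisfying $\varphi_R(x)=|x|^2$ for $|x|\leq R$, $\|\partial^\alpha\varphi_R\|_\infty\leq C_\alpha R^{2-|\alpha|}$ for $|\alpha|\geq 1$, and the convexity condition $\nabla^2\varphi_R\leq 2\,I$ in the matrix sense, and set $V_R(t):=\int_{\R^3}\varphi_R(x)|u(t,x)|^2\,dx$. Since $\varphi_R$ is bounded, $V_R(t)\geq 0$ is finite for all $t$. Differentiating twice in time, using \eqref{GP} and repeating the integration by parts that produces $V''=4G$ in the case $\varphi_R(x)=|x|^2$, one obtains
\begin{equation*}
V_R''(t)=4G(u(t))+\mathcal{R}_R(u(t)),
\end{equation*}
where $\mathcal{R}_R$ collects four contributions: a kinetic one, $4\sum_{j,k}\int(\partial_{jk}^2\varphi_R-2\delta_{jk})\RE(\partial_j u\,\overline{\partial_k u})\,dx$, which is $\leq 0$ by the convexity choice on $\varphi_R$; a biharmonic one controlled by $CR^{-2}M(u_0)$ via $\|\Delta^2\varphi_R\|_\infty\lesssim R^{-2}$; a local potential tail $\lambda_1\int(\Delta\varphi_R-6)|u|^4\,dx$ supported on $\{|x|>R\}$; and a dipolar tail involving the convolution with $K$ weighted by $\nabla\varphi_R-2x$.

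The heart of the argument is then to show that all remainder pieces are $o_R(1)$ uniformly in $t$ under the assumed bound $\|u(t)\|_{H^1}\leq M$. The convex/kinetic term only helps; the biharmonic one is handled directly by mass conservation; the local cubic tail is controlled using the Sobolev embedding $H^1\hookrightarrow L^4$ together with the fact that $\Delta\varphi_R-6$ is supported on $\{|x|>R\}$ and uniformly bounded, which, combined with the uniform control of $\|u\|_{L^4}$, extracts smallness from the support restriction. The dipolar tail is the delicate ingredient: one rewrites $\mathcal K f=K\ast f$ via its Fourier symbol $\widehat K(\xi)=c\,(2\xi_3^2-\xi_1^2-\xi_2^2)/|\xi|^2$ and uses the $L^p$-continuity of the Riesz-transform powers recalled in the preliminary propositions of the paper, so that after an integration by parts moving a derivative off $K$ onto $\nabla\varphi_R-2x$ (bounded and supported on $\{|x|>R\}$), the resulting expression becomes $o_R(1)$ with constants depending only on $M$ and $M(u_0)$.

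Once these bounds are in place, fix $R$ so large that $|\mathcal{R}_R(u(t))|\leq 2\delta$ for every $t\geq 0$. Then $V_R''(t)\leq -4\delta+2\delta=-2\delta$, and two integrations in time give $V_R(t)\leq V_R(0)+V_R'(0)\,t-\delta t^2\to -\infty$, contradicting $V_R\geq 0$. The main obstacle, as I see it, is precisely the dipolar remainder: because $K$ is a singular, nonintegrable, Calder\'on--Zygmund-type kernel, a naive spatial localization of the nonlocal interaction fails, and one has to couple the cutoff procedure with the Riesz-transform representation of $\mathcal K$ (and weighted estimates for those operators) to decouple the inside/outside contributions and exhibit the decay in $R$ that closes the contradiction.
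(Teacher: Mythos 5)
Your plan is the right one at the top level (contradiction via a localized virial), but there is a genuine gap at the decisive step: you claim that, under the bounded-$H^1$ assumption alone, the remainder $\mathcal R_R(u(t))$ is $o_R(1)$ \emph{uniformly in time}. That is false in general. The quartic tail $\lambda_1\int(\Delta\varphi_R-6)|u|^4\,dx$ and the dipolar tail are localized to $\{|x|>R\}$, and you argue that the uniform $L^4$ bound ``extracts smallness from the support restriction'' --- but a global bound on $\|u(t)\|_{L^4}$ says nothing about $\|u(t)\|_{L^4(|x|>R)}$ uniformly in $t$: nothing prevents the solution from carrying a fixed chunk of mass out to $|x|\sim R(t)\to\infty$ (a traveling-wave-like escape), which is exactly a behavior compatible with a grow-up scenario. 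Without a precompactness property of the flow modulo translations --- available in the scattering Kenig--Merle argument via the critical element (cf.\ the estimate \eqref{small-prec}), but unavailable here --- there is no way to fix $R$ once and for all so that the tail terms are $\leq\delta$ for every $t\geq 0$. Consequently the final step, ``fix $R$ so large that $|\mathcal R_R(u(t))|\leq 2\delta$ for every $t\geq 0$,'' cannot be carried out, and the two integrations do not give a contradiction.

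The paper closes this gap differently, following Du--Wu--Zhang: it keeps $R$ and the time horizon $T$ coupled. One first derives a localized virial bound of the form $V''_{\varphi_R}(t)\lesssim G(u(t))+\bigl(R^{-1}+\|u(t)\|_{L^2(|x|\gtrsim R)}^{1/2}+\|u(t)\|_{L^2(|x|\gtrsim R)}\bigr)$, acknowledging explicitly that the error involves the outer $L^2$ mass. Then, from the uniform $\dot H^1$ bound, one proves an \emph{almost finite propagation speed} estimate: $\int_{|x|\gtrsim R}|u(t)|^2\,dx\leq\eta+o_R(1)$ for all $t\in[0,T]$ with $T=\eta R/C$. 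So the outer mass is small, but only for a time window whose length grows like $R$. Plugging this in gives $V''_{\varphi_R}(t)\lesssim -\delta$ on $[0,T]$, and the double integration over $[0,T]$ produces a term of size $-\delta\eta^2R^2/(2C^2)$ which overcomes the $o_R(1)R^2$ boundary term, giving $V_{\varphi_R}(T)<0$ for $R$ large. This coupling between $T$ and $R$ is the essential mechanism you are missing; without it, the tail control you assert does not hold, and the localized virial alone is not enough to conclude.
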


The next corollary actually shows that the condition given in \autoref{theo-blow-crite} is non-empty, as an initial datum belonging to the region $(BC)$, see \eqref{blow-reg}, leads to  solution satisfying \eqref{blow-crite} (see our paper \cite[Section 3]{BF21}).
\begin{corollary} 
Let $\lambda_1$ and $\lambda_2$ satisfy \eqref{UR}, and $Q$ be a ground state related to \eqref{ell-equ}. Assume that $u_0 \in H^1(\R^3)$ satisfies \eqref{blow-reg}
\noindent and let $u(t)$ the corresponding solution to \eqref{GP}. Then either $T_{max}<\infty$, or $T_{max}=\infty$ and $u(t)$ grows-up.
\end{corollary}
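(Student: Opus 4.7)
The plan is to reduce the Corollary to Theorem \ref{theo-blow-crite} by establishing the uniform upper bound $G(u(t)) \leq -\delta$ for some $\delta>0$ and all $t\in[0,T_{max})$. Mass and energy conservation together with the definition of $G$ immediately give the identity
\[
G(u(t)) = 3E(u_0) - \tfrac12 H(u(t)),
\]
so the task boils down to proving that the kinetic energy $H(u(t))$ is bounded below, away from a critical threshold, uniformly in time.

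The central idea is a continuity/trapping argument. Multiplying $2E(u_0)=H(u(t))+P(u(t))$ by $M(u_0)=M(u(t))$ and applying the Gagliardo--Nirenberg inequality \eqref{GN-ineq} produces, for $y(t):=H(u(t))M(u_0)$,
\[
y(t) - C_{GN}\, y(t)^{3/2} \;\leq\; 2E(u_0)M(u_0).
\]
Define $g(y):=y - C_{GN} y^{3/2}$. An elementary optimization, combined with the Pohozaev relations recalled in \eqref{inde-quant-proof}, shows that $g$ attains its maximum at $y_* = \tfrac{4}{9}C_{GN}^{-2} = H(Q)M(Q)$ with value $g(y_*) = \tfrac{4}{27}C_{GN}^{-2} = 2E(Q)M(Q)$. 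The condition \eqref{blow-reg} then reads
\[
g(y(t)) \leq 2E(u_0)M(u_0) < g(y_*), \qquad y(0) > y_*.
\]
Since $g$ is strictly increasing on $[0,y_*]$ and strictly decreasing on $[y_*,\infty)$, the sublevel set $\{y\geq 0 : g(y) \leq 2E(u_0)M(u_0)\}$ splits into two disjoint closed intervals $[0,y_-]$ and $[y_+,\infty)$ with $y_- < y_* < y_+$. By $y(0)>y_*$, the initial datum lies in the upper component, and by continuity of $t\mapsto y(t)$ (which follows from $u\in C([0,T_{max});H^1)$) it remains there for all $t$; hence $H(u(t))M(u_0) \geq y_+ > H(Q)M(Q)$ uniformly.

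Plugging this back and using $3E(Q)M(Q) = \tfrac12 H(Q)M(Q) = \tfrac12 y_*$, together with the strict inequality $E(u_0)M(u_0) < E(Q)M(Q)$, yields
\[
G(u(t))\,M(u_0) = 3E(u_0)M(u_0) - \tfrac12 H(u(t))M(u_0) \leq 3E(u_0)M(u_0) - \tfrac12 y_+ < 0,
\]
and the right-hand side is a negative constant independent of $t$. Setting $\delta := \tfrac{1}{2M(u_0)}\bigl(y_+ - 6E(u_0)M(u_0)/y_*\cdot y_*\bigr)>0$ (equivalently, any strictly positive gap obtained above) gives the hypothesis \eqref{blow-crite} of Theorem \ref{theo-blow-crite}, and the conclusion of the Corollary follows at once.

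The only step requiring care is the trapping argument: one must verify that the shape of $g$ forces $y(t)$ to stay in the upper branch rather than crossing to the lower one, which is why the two strict inequalities in \eqref{blow-reg} are both essential—the energy bound produces the barrier $g(y_*) - 2E(u_0)M(u_0)>0$, and the kinetic bound selects the correct side of that barrier. I do not expect technical obstacles beyond this standard convexity/continuity observation, which is the dipolar analogue of the Holmer--Roudenko trapping for the cubic NLS.
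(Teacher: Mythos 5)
Your proposal is correct and follows essentially the same route the paper (via \cite[Section 3]{BF21}) takes: the continuity/trapping argument with $g(y)=y-C_{GN}y^{3/2}$, using $y_*=H(Q)M(Q)$ as the critical point and the two strict inequalities in \eqref{blow-reg} to force $H(u(t))M(u_0)$ into the branch $[y_+,\infty)$ with $y_+>y_*$, which via $G(u(t))=3E(u_0)-\tfrac12 H(u(t))$ and $3E(Q)M(Q)=\tfrac12 H(Q)M(Q)$ yields the uniform bound \eqref{blow-crite} and lets you invoke \autoref{theo-blow-crite}. This is the standard Holmer--Roudenko-type trapping adapted to the dipolar GPE, exactly as indicated in point (i) of \autoref{subsec:vir}.
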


\subsubsection{Dynamics above the threshold} For the dynamical properties of solutions to \eqref{GP} above the threshold, we need to further restrict the Unstable Regime, and we introduce the Restricted Unstable Regime as follows:
\begin{equation}\label{cond-GW}
RUR:=\left\{ \begin{aligned}
\lambda_1 +\frac{8\pi}{3} \lambda_2 <0&\quad \hbox{ if } \quad \lambda_2>0\\
\lambda_1-\frac{4\pi}{3} \lambda_2 <0  & \quad\hbox{ if } \quad\lambda_2<0
\end{aligned}\right..
\end{equation}
For a ground state  $Q$ related to \eqref{ell-equ}, we also give the scattering or blow-up conditions above the threshold:
\begin{equation}\label{above:sca}
(SC^\prime):=\left\{ 
\begin{aligned}
E(u_0)M(u_0) &\geq  E(Q) M(Q) \\
 \frac{E(u_0)M(u_0)}{E(Q)M(Q)} &\left(1-\frac{(V'(0))^2}{8E(u_0)V(0)}\right) \leq 1\\
		-P(u_0)M(u_0) &< -P(Q)M(Q) \\
		V'(0) &\geq 0 
\end{aligned}\right.,
\end{equation}
and
\begin{equation}\label{above:b-up}
(BC^\prime):=\left\{ 
\begin{aligned}
E(u_0)M(u_0) &\geq  E(Q) M(Q) \\
 \frac{E(u_0)M(u_0)}{E(Q)M(Q)} &\left(1-\frac{(V'(0))^2}{8E(u_0)V(0)}\right) \leq 1 \\
-P(u_0)M(u_0) &> -P(Q)M(Q)\\
V'(0) &\leq 0 
\end{aligned}\right.,
\end{equation}
respectively. Initial data satisfying \eqref{above:sca} or \eqref{above:b-up} can be constructed by a simple scaling argument, see \cite{DFH} and \cite{GW}.\\

The following blow-up result above the threshold has been given by Gao and Wang in \cite{GW}.	
	\begin{theorem} \label{theo-blow-GW}
		Let $\lambda_1$ and $\lambda_2$ satisfy \eqref{cond-GW}. 
				Let $Q$ be a ground state related to \eqref{ell-equ}, and $u_0 \in \Sigma$ satisfy \eqref{above:b-up}. Then the corresponding solution $u(t)$ to \eqref{GP} blows-up forward in finite time.
	\end{theorem}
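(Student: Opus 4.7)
The plan is to run a Glassey virial / concavity argument, adapted to data above the Mass-Energy threshold in the spirit of Duyckaerts--Roudenko and Ogawa--Tsutsumi. Since $u_0 \in \Sigma$, standard persistence properties give $u(t) \in \Sigma$ with $V \in C^2([0,T_{max}))$, so the virial identity combined with energy conservation and the relation $P(u) = 2E(u) - H(u)$ yields
\[
V''(t) = 4G(u(t)) = 12\,E(u_0) - 2\,H(u(t)).
\]
The goal is a uniform kinetic lower bound $H(u(t)) \geq 6E(u_0) + 2\delta$ for some $\delta > 0$: once this holds, $V''(t) \leq -4\delta$ on $[0, T_{max})$, and integrating twice with the sign condition $V'(0) \leq 0$ from $(BC'_4)$ gives $0 \leq V(t) \leq V(0) + V'(0)t - 2\delta t^2$, which is impossible beyond some explicit finite $t^\ast$, forcing $T_{max} \leq t^\ast$.

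To produce the kinetic lower bound, I would use the sharp Gagliardo--Nirenberg inequality \eqref{GN-ineq} together with the Pohozaev identities $H(Q) = 6M(Q) = -\tfrac{3}{2}P(Q)$. They combine to give, for any $w \in H^1(\R^3)$,
\[
\frac{E(w)M(w)}{E(Q)M(Q)} \geq f\!\left(\frac{H(w)M(w)}{H(Q)M(Q)}\right), \qquad f(y) := 3y - 2y^{3/2},
\]
with $f \leq 1$ (equality at $y = 1$). Below the threshold this gives the familiar dichotomy confining the kinetic ratio to an upper branch, but above the threshold $\alpha := E(u_0)M(u_0)/(E(Q)M(Q)) \geq 1$ renders the dichotomy vacuous. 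The remedy is to replace $u(t)$ by its phase-twisted companion $v_t(x) := u(t,x)\,e^{-i\sigma(t)|x|^2/2}$ with $\sigma(t) := V'(t)/(2V(t))$: using $V'(t) = 2\,\mathrm{Im}\!\int \bar u\,(x\cdot\nabla u)\,dx$ one finds $M(v_t) = M(u(t))$, $V(v_t) = V(u(t))$, $V'(v_t) = 0$, $P(v_t) = P(u(t))$, $H(v_t) = H(u(t)) - (V'(t))^2/(4V(t))$, and
\[
E(v_t)M(v_t) = E(u_0)M(u_0)\,\Big(1 - \tfrac{(V'(t))^2}{8E(u_0)V(t)}\Big).
\]
Evaluated at $t = 0$, condition $(BC'_2)$ reads precisely $E(v_0)M(v_0) \leq E(Q)M(Q)$, i.e.\ the modified datum is at/below threshold, while $(BC'_3)$ and GN applied to $v_0$ (using $P(v_0) = P(u_0)$) force $H(v_0)M(v_0) > H(Q)M(Q)$, i.e.\ the upper branch. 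So $v_0$ lies in the classical below-threshold blow-up region $(BC)$.

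The main technical step is then to propagate both bounds to all $t \in [0, T_{max})$: the modified-Mass-Energy bound and the upper-branch bound $-P(u(t))M(u(t)) > -P(Q)M(Q)$. I would do this by a continuity/bootstrap driven by the virial identity and the Cauchy--Schwarz bound $(V'(t))^2 \leq 4V(t)H(u(t))$, using the Restricted Unstable Regime \eqref{cond-GW} to guarantee the coercivity of the kinetic part of $E$ in both sub-cases $\lambda_2 \gtrless 0$ so that the sharp GN dichotomy survives along the flow. Once propagated, the dichotomy applied to $v_t$ gives $H(v_t)M(v_t) \geq y_+ H(Q)M(Q)$ with $y_+ > 1$ uniform in $t$, and since $H(u(t)) \geq H(v_t)$ the desired uniform kinetic lower bound follows.

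The principal obstacle is exactly this propagation: the phase-twist $u(t) \mapsto v_t$ is \emph{not} a symmetry of \eqref{GP}, so one cannot transport the below-threshold result from $v_0$ to $v_t$ at the solution level. Instead one must control directly the time-dependent scalar $E(u(t))M(u(t))(1 - (V'(t))^2/(8E(u_0)V(t)))$ along the flow, using the virial identity, Cauchy--Schwarz, and the sharp GN inequality tailored to the dipolar nonlinearity -- with the RUR geometry from \eqref{cond-GW} playing a crucial role.
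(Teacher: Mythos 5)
The paper itself does not prove \autoref{theo-blow-GW}; it cites Gao--Wang \cite{GW} and sketches only the scattering companion \autoref{theo-scat-above} in Section~4. Your high-level strategy---the phase-twist $v_t=u(t)e^{-i\sigma(t)|x|^2/2}$, $\sigma(t)=V'(t)/(2V(t))$, the translation of the conditions in \eqref{above:b-up} into below-threshold conditions for $v_0$, and a virial argument---is exactly the Duyckaerts--Roudenko / Gao--Wang framework underlying that reference and the paper's Section~4 sketch. Your algebra is correct: the second line of \eqref{above:b-up} is precisely $E(v_0)M(v_0)\leq E(Q)M(Q)$, and the third line plus Gagliardo--Nirenberg applied to $v_0$ forces $H(v_0)M(v_0)>H(Q)M(Q)$. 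But there are two substantive gaps.

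First, the blow-up mechanism you propose is the wrong one. You aim for $H(u(t))\geq 6E(u_0)+2\delta$, i.e.\ $V''(t)\leq-4\delta$, and then Glassey concavity. But above the threshold the ODE argument yields only $V''(t)\leq\zeta_0-\delta_0$, where $\zeta_0$ is the minimizer of the function $h(\zeta)=6E(u)-\zeta-(4E(u)-\zeta)^{2/3}/(C_{GN}^{2/3}M(u)^{1/3})$ introduced in Section~4, and by \eqref{iden-lambda-0} the first line of \eqref{above:b-up} gives $\zeta_0\geq0$. So $V''(t)$ may well remain nonnegative, and concavity of $V$ can fail. The actual mechanism in \cite{GW} (as in \cite{DR2}) is linear decay of $z(t):=\sqrt{V(t)}$: one propagates $z'(t)\leq-\sqrt{h(\zeta_0)}=-\sqrt{\zeta_0/2}$ uniformly, so $z(t)\leq z(0)-\sqrt{\zeta_0/2}\,t$ must vanish in finite time, which contradicts global existence via $\|u\|_{L^2}^2\leq 2\|xu\|_{L^2}\|\nabla u\|_{L^2}$ and mass conservation. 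Second, you flag the propagation as the ``principal obstacle'' and correctly observe the phase-twist is not a symmetry of \eqref{GP}, but you do not resolve it. The resolution is that nothing is ever transported at the level of a twisted flow: the inequality \eqref{est-GW} holds for $u(t)$ at each fixed time (it is Gagliardo--Nirenberg applied to the snapshot $v_t$), and combined with $V'(t)=2\Im\int \bar u\,x\cdot\nabla u\,dx$ and the identities \eqref{iden-N-H}, it becomes the purely scalar constraint $(z'(t))^2\leq h(V''(t))$ valid throughout the lifespan. The four lines of \eqref{above:b-up} translate to $\zeta_0\geq0$, $(z'(0))^2\geq h(\zeta_0)$, $V''(0)<\zeta_0$, $z'(0)\leq0$, and a continuity/bootstrap on the continuous scalar functions $V''(t)$ and $z'(t)$---using that $h$ attains its minimum value $\zeta_0/2$ only at $\zeta=\zeta_0$ and examining the sign of $z''=(V''-2(z')^2)/(2z)$ at a putative first crossing---propagates $V''(t)<\zeta_0$ and $z'(t)\leq-\sqrt{\zeta_0/2}$ for all $t\in[0,T_{max})$. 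That scalar ODE argument is the substance of the theorem and is absent from your proposal.
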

The counterpart of \autoref{theo-blow-GW} is the following scattering result, given for initial data satisfying \eqref{above:sca}. It is one of the main theorems contained in the paper by Dinh, Hajaiej, and the second author \cite{DFH}.

		\begin{theorem} \label{theo-scat-above}
		Let $\lambda_1$ and $\lambda_2$ satisfy \eqref{cond-GW}. Let $Q$ be a ground state related to \eqref{ell-equ}, and $u_0 \in \Sigma$ be such that \eqref{above:sca} holds true.	 Then the corresponding solution $u(t)$ to \eqref{GP} exists globally and scatters in $H^1(\R^3)$ forward in time.
	\end{theorem}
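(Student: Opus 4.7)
The plan is to reduce the above-threshold scattering statement to the below-threshold situation already handled by \autoref{theo-scat-BF}, following the strategy introduced by Duyckaerts and Roudenko for the classical focusing cubic NLS, adapted to the dipolar nonlinearity. The starting observation is that the second line of \eqref{above:sca} can be rewritten as $\widetilde E(u_0)\,M(u_0)\leq E(Q)M(Q)$, where $\widetilde E(u_0):=E(u_0)-(V'(0))^2/(8V(0))$ plays the role of a reduced energy. Coupled with $V'(0)\geq 0$ and $-P(u_0)M(u_0)<-P(Q)M(Q)$, this reduced bound furnishes an effective below-threshold control on the initial configuration.

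The technical heart of the argument is the propagation of this control via the virial identity
\[
V''(t)=4G(u(t))=12\,E(u_0)-2\,H(u(t)),
\]
combined with the Cauchy--Schwarz bound $(V'(t))^2\leq 4V(t)\,H(u(t))$. First I would invoke the Pohozaev identities $E(Q)=\tfrac16 H(Q)=-\tfrac14 P(Q)$ together with the Gagliardo--Nirenberg inequality \eqref{GN-ineq} to show that the strict inequality $H(u(t))M(u(t))<H(Q)M(Q)$ cannot be violated as long as $V'(t)$ does not vanish. Next, one analyses the second-order ODE-type inequality satisfied by $V(t)$ to deduce that $V'(t)\geq V'(0)\geq 0$ persists on $[0,T_{max})$, locking $u(t)$ into the kinetic-energy regime $H(u(t))M(u(t))<H(Q)M(Q)$ for all positive times. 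This uniform bound yields $T_{max}=+\infty$ and a priori $H^1$ control on $u$.

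With global existence and the propagated kinetic bound in hand, scattering follows by running the concentration--compactness and rigidity machinery used in the proof of \autoref{theo-scat-BF}, since that scheme relies only on the coercivity $H(u(t))M(u(t))<H(Q)M(Q)$ and not on the relative size of the mass-energies of $u_0$ and $Q$. The main obstacle I foresee is the propagation step above: one must carefully track signs in the reduced-energy identity, and exploit that in the Restricted Unstable Regime \eqref{cond-GW} the nonlocal contribution to $P$ still cooperates with the Gagliardo--Nirenberg bound. Controlling $(K\ast|u|^2)|u|^2$ requires invoking the $L^p$ boundedness of powers of Riesz transforms of \autoref{prop:cont-K}, and it is in this dipolar-specific step that the analogy with the Duyckaerts--Roudenko argument demands the most care: any loss of sign information in the virial monotonicity would prevent the reduction to the below-threshold framework and, with it, the applicability of \autoref{theo-scat-BF}.
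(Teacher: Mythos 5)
Your propagation strategy and the rewriting of the second condition in \eqref{above:sca} as a reduced-energy bound are consistent with the ODE argument in the spirit of Duyckaerts--Roudenko that the paper also uses (and the two conditions are equivalent, as you correctly note). However, the final step of your argument contains a genuine gap: you cannot conclude scattering by invoking (or ``re-running'') the concentration--compactness machinery behind \autoref{theo-scat-BF}, because that argument is not available once $E(u_0)M(u_0)\geq E(Q)M(Q)$. The induction-on-energy step used to construct the minimal non-scattering solution $u_{crit}$ in \autoref{lemcri} crucially works below the threshold: nonlinear profiles extracted from the linear profile decomposition are converted into scattering solutions via the wave operators and the assumption that their mass-energy lies strictly below $E(Q)M(Q)$. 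Above the threshold this step fails, and your claim that ``that scheme relies only on the coercivity $H(u(t))M(u(t))<H(Q)M(Q)$'' is not accurate.

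The paper's actual proof hinges on a different ingredient: the scattering criterion of \autoref{theo-scat-crite}, which trades the mass-energy hypothesis of \eqref{sc-reg} for a uniform-in-time bound on the potential energy, $\sup_t -P(u(t))M(u(t)) < -P(Q)M(Q)$. The proof of that criterion requires a \emph{nonlinear} profile decomposition along bounded nonlinear flows (inspired by Guevara's treatment of the NLS), precisely because the wave-operator construction of nonlinear profiles is unavailable above the threshold; this is also where the restriction to the regime \eqref{cond-GW}, ensuring $P(u(t))<0$ along the flow, is essential. Your ODE propagation step --- once fully made rigorous via the substitution $z(t)=\sqrt{V(t)}$, the auxiliary function $h(\zeta)$ with its minimum $\zeta_0$, the Gao--Wang inequality \eqref{est-GW}, and a continuity argument giving $V''(t)\geq \zeta_0+\delta_0>0$ --- does produce the uniform kinetic/potential control you describe. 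But without \autoref{theo-scat-crite} (or an equivalent new ingredient), that uniform control does not close the argument; you would need to actually establish such a criterion rather than appeal to the below-threshold theorem.
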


Concerning the Theorems above, it is worth mentioning the reason why we have to consider the subset $RUR$ of the Unstable Regime, see \eqref{cond-GW}, instead of the whole configurations of the parameters $\lambda_1$ and $\lambda_2$ as in \eqref{UR}.	Conditions \eqref{cond-GW} imply a control on the potential energy sign, specifically it is negative for any time along the evolution of the solution. This will play a crucial role in the proof of the scattering criterion \autoref{theo-scat-crite} below.

\subsubsection{Dynamics at the threshold}
The next Theorem deals with  the long time dynamics for solutions  to \eqref{GP} at the Mass-Energy threshold, i.e. when the initial datum satisfies
\begin{equation} 
		E(u_0)M(u_0) = E(Q) M(Q). \label{cond-ener-at}
		\end{equation}
In \cite{DFH}, Dinh, Hajaiej, and the second author, gave a complete picture of the dynamics under the hypothesis \eqref{cond-ener-at}, by analysing different scenario described in terms of the quantity $H(u_0) M(u_0)$. To the best of our knowledge, early results on for the focusing cubic NLS at the threshold are given in the work of Duyckaerts and Roudenko \cite{DR1}. The Theorem is as follows.
	\begin{theorem}\label{theo-dyna-at}
		Let $\lambda_1$ and $\lambda_2$ satisfy \eqref{UR}. Let $Q$ be a ground state related to \eqref{ell-equ}. Suppose that  $u_0 \in H^1(\R^3)$ satisfies the Mass-Energy threshold condition \eqref{cond-ener-at}.	 We have the following three scenarios. \\\medskip
			\noindent \textup{(i)}	In addition to \eqref{cond-ener-at}, suppose that  
			\begin{align} \label{cond-scat-at}
			H(u_0) M(u_0) < H(Q) M(Q),
			\end{align}
			and that the  corresponding solution $u(t)$ to \eqref{GP} is defined on the maximal interval of existence $(-T_{min}, T_{max})$. Then for every $t\in(-T_{min}, T_{max})$
			\begin{equation*} \label{est-solu-at-1}
			H(u(t)) M(u(t)) < H(Q) M(Q)
			\end{equation*}
			and in particular $T_{min}=T_{max}=\infty$. Moreover, provided $\lambda_1$ and $\lambda_2$ satisfy \eqref{cond-GW}, then the solution
			\begin{itemize}
			\item either scatters in $H^1(\R^3)$ forward in time, 
			\item or there exist a diverging sequence of times $t_n\rightarrow \infty$ as $n\to\infty$, a ground state $\tilde{Q}$ related to \eqref{ell-equ}, and a  sequence $\{y_n\}_{n\geq1}\subset\R^3$ such that for some $\theta \in \R$ and $\mu>0$
			\begin{equation} \label{conver-tn}
			u(t_n, \cdot-y_n) \rightarrow e^{i\theta} \mu \tilde{Q}(\mu\cdot) 			\end{equation}
			strongly in $H^1(\R^3)$ as $n\rightarrow \infty$.
			\end{itemize}
			\medskip
			\noindent \textup{(ii)} In addition to \eqref{cond-ener-at}, suppose that
			\begin{equation} \label{cond-at}
			H(u_0) M(u_0)= H(Q) M(Q),
			\end{equation}
			then there exists a ground state $\tilde{Q}$ related to \eqref{ell-equ} such that the solution $u(t)$ to \eqref{GP} satisfies $u(t,x) = e^{i\mu^2t} e^{i\theta} \mu \tilde{Q}(\mu x)$ for some $\theta\in \R$ and $\mu>0$, and hence the solution is global.\medskip\\ 
			\noindent  \textup{(iii)} In addition to \eqref{cond-ener-at}, suppose that
			\begin{align} \label{cond-blow-at}
			H(u_0) M(u_0)> H(Q) M(Q),
			\end{align}
			and that the  corresponding solution $u(t)$ to \eqref{GP} is defined on the maximal interval of existence $(-T_{min}, T_{max})$.	Then for every $t\in(-T_{min}, T_{max})$
		\begin{equation*} \label{est-solu-at-3}
			H(u(t)) M(u(t)) > H(Q) M(Q).
			\end{equation*}
 Furthermore, the solution 
\begin{itemize}
\item either blows-up forward in finite time,
\item or it grows-up along some diverging sequence of times $t_n\rightarrow \infty$ as $n\to \infty$, 
\item or there exists a diverging sequence of times $t_n \rightarrow \infty$ as $n\to \infty$ such that \eqref{conver-tn} holds for some sequence $\{y_n\}_{n\geq1}\subset\R^3,$ and some parameters  $\theta\in \R,$ and  $\mu>0.$
\end{itemize}
\noindent Provided $u_0\in \Sigma$, then the grow-up scenario as in the second point  is ruled out.
\end{theorem}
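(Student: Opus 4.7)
The strategy rests on the sharp Gagliardo-Nirenberg inequality \eqref{GN-ineq} together with the Pohozaev relations \eqref{inde-quant-proof}. Write $y(t):=H(u(t))M(u(t))$ and $y_Q:=H(Q)M(Q)=\tfrac{4}{9}(C_{GN})^{-2}$, and introduce the one-variable function $f(y):=y-C_{GN}\,y^{3/2}$, whose unique maximum on $[0,\infty)$ is attained exactly at $y_Q$, with value $2E(Q)M(Q)$. The conservation laws together with \eqref{GN-ineq} give $f(y(t))\le 2E(u_0)M(u_0)$, which at the threshold becomes $f(y(t))\le f(y_Q)$; since $y(\cdot)$ is continuous it must remain on one side of $y_Q$ throughout the maximal interval of existence. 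A short computation using $P(u)=2E(u)-H(u)$ yields the key identity
\begin{equation*}
G(u(t))\,M(u(t)) = 3E(u_0)M(u_0)-\tfrac12 y(t)=\tfrac12\bigl(y_Q-y(t)\bigr),
\end{equation*}
which converts the sign of $G(u(t))$ into the location of $y(t)$ relative to $y_Q$.

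For \textup{(i)}, $y(0)<y_Q$ propagates to $y(t)<y_Q$ by continuity, giving uniform kinetic bounds and hence global existence. To separate scattering from convergence to a ground state one implements a Kenig-Merle concentration-compactness/rigidity argument: assuming scattering fails, one constructs a minimal non-scattering solution $u_c(t)$ whose trajectory is precompact in $H^1(\R^3)$ modulo spatial translations. Extracting a limit along a diverging time sequence $t_n$, the threshold condition $f(y)=f(y_Q)$ is saturated in the limit, and equality in \eqref{GN-ineq} forces this limit to be a ground state $\tilde Q$ up to the invariances of the equation; this yields \eqref{conver-tn}.

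Case \textup{(ii)} is a rigidity statement: $y(0)=y_Q$ together with $E(u_0)M(u_0)=E(Q)M(Q)$ is equivalent to equality in \eqref{GN-ineq} for $u_0$. Hence $u_0$ is a Gagliardo-Nirenberg optimizer, namely a rescaled ground state $u_0(x)=e^{i\theta}\mu\tilde Q(\mu x)$ (any translation being absorbed into a translate of $\tilde Q$). The explicit standing wave $u(t,x)=e^{i\mu^2t}e^{i\theta}\mu\tilde Q(\mu x)$ solves \eqref{GP} with this initial datum, and uniqueness for the Cauchy problem closes the case. For \textup{(iii)}, $y(0)>y_Q$ propagates to $y(t)>y_Q$, i.e.\ $G(u(t))<0$ throughout the lifespan, and two sub-scenarios arise. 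Either $\sup_{t}G(u(t))<0$, in which case \autoref{theo-blow-crite} immediately delivers blow-up or grow-up; or $\sup_{t}G(u(t))=0$, meaning there exists a sequence $t_n$ with $y(t_n)\downarrow y_Q$, along which $P(u(t_n))M(u(t_n))\to P(Q)M(Q)$ by energy conservation, so that $u(t_n)$ is an almost-optimizer of \eqref{GN-ineq}. A profile decomposition combined with the rigidity of Gagliardo-Nirenberg optimizers then yields, after a spatial translation $y_n$, strong $H^1$-convergence to a rescaled ground state, giving \eqref{conver-tn}.

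The $\Sigma$-refinement rules out the grow-up alternative via the virial identity $V''(t)=4G(u(t))$: if $\sup_t G(u(t))\le -\delta$, then $V''\le -4\delta$, which would force $V$ to vanish in finite time, contradicting $V\ge 0$ unless $T_{\max}<\infty$. The main technical hurdle in cases \textup{(i)} and \textup{(iii)} is precisely the strong-convergence step along the almost-optimizing sequence $t_n$: the dipolar kernel only enjoys a cylindrical symmetry, so the sole symmetry available to restore compactness in the profile decomposition is spatial translation, with no radial reduction at one's disposal. The uniqueness (up to the invariances $e^{i\theta}\mu u(\mu(\cdot-y))$) of Gagliardo-Nirenberg optimizers, coupled with a careful Brezis-Lieb splitting in $H^1(\R^3)$ patterned on the Duyckaerts-Roudenko scheme \cite{DR1}, is what ultimately upgrades weak compactness to the strong convergence \eqref{conver-tn}.
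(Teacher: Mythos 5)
Your overall architecture matches the paper's---the same three cases, the same dichotomy (bounded away from the threshold $\Rightarrow$ scattering or blow-up criterion; supremum attained $\Rightarrow$ convergence to a ground state along a sequence), the same GN-saturation characterization for case (ii), and the same Glassey virial argument for the $\Sigma$-refinement. Two steps in your write-up, however, are not correct as stated.

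First, the propagation-of-sign step is not justified by what you wrote. You claim that ``$f(y(t))\le f(y_Q)$, together with continuity of $y$, keeps $y(t)$ on one side of $y_Q$.'' But $f(y_Q)$ is the \emph{global maximum} of $f$ on $[0,\infty)$, so $f(y)\le f(y_Q)$ holds for \emph{every} $y$; this inequality carries no information and certainly does not prevent $y(t)$ from crossing $y_Q$. The actual mechanism (used in the paper) is a rigidity argument: if $y(\tau)=y_Q$ at some time $\tau$ in the lifespan, then combined with $E(u(\tau))M(u(\tau))=E(Q)M(Q)$ one deduces $-P(u(\tau))M(u(\tau))=-P(Q)M(Q)$, so $u(\tau)$ saturates the Gagliardo--Nirenberg inequality \eqref{GN-ineq}; a Lions concentration-compactness lemma then identifies $u(\tau)$ as $e^{i\theta}\mu\tilde Q(\mu\,\cdot)$ for some ground state $\tilde Q$, whence by uniqueness of the Cauchy problem the entire orbit is a standing wave and $y\equiv y_Q$, contradicting $y(0)\neq y_Q$. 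Without this rigidity step (or some genuine substitute), the ``stay on one side'' assertion is a gap. It is worth noting that below the threshold the argument is softer, since there $f(y(t))\le 2E(u_0)M(u_0)<f(y_Q)$ is strict and $y(t)=y_Q$ is impossible outright; at the threshold that strictness is lost, which is precisely why rigidity is needed.

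Second, your closing paragraph invokes ``the uniqueness (up to the invariances $e^{i\theta}\mu u(\mu(\cdot-y))$) of Gagliardo--Nirenberg optimizers.'' For this dipolar problem that is not available: the paper explicitly points out that uniqueness of ground states is unknown, even modulo symmetries. The Lions-type lemma and the Brezis--Lieb decomposition deliver convergence to \emph{some} ground state $\tilde Q$, possibly different from $Q$, and the statement \eqref{conver-tn} is formulated precisely to allow this. The strong $H^1$-convergence does not require uniqueness; it only requires that the almost-optimizing sequence concentrate to a single profile which is an optimizer, and the theorem accepts whatever optimizer appears.

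Minor differences from the paper's route, which are not errors: the paper first rescales so that $M(u_0)=M(Q)$ and $E(u_0)=E(Q)$, which simplifies all the identities; and for case (i) it does not re-run a fresh Kenig--Merle scheme at the threshold but instead feeds the uniform bound $\sup_t H(u(t))<H(Q)$ into the already-established scattering criterion \autoref{theo-scat-crite}, obtaining scattering directly. Your reformulation in terms of $f(y)=y-C_{GN}y^{3/2}$ and its maximizer $y_Q=\tfrac{4}{9}C_{GN}^{-2}$, and the identity $G(u(t))M(u(t))=\tfrac12(y_Q-y(t))$, are correct and a nice bookkeeping device.
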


\section{Decay for powers of  Riesz transforms and virial arguments}\label{sec-riesz}

This section provides the first technical tools we need in order to prove our main results. Moreover, we present the strategy  we adopt to prove the main theorems, which strongly rely on virial arguments based on the decay for powers of Riesz transforms that we are going to prove. \\

First of all, we recall the fact that the dipolar kernel defines a Calder\'on-Zygmund operator, hence it is a well-kwon fact that it yields to a map continuous from $L^p$ into itself, for non end-point Lebesgue exponents, namely for $p\neq1$ and $p\neq \infty$. For a proof  see \cite[Lemma 2.1]{CMS}.
\begin{prop}\label{prop:cont-K} The convolution operator $f\mapsto K\ast f$ can be extended as a continuous operator from $L^p$ into itself, for any $p\in(1,\infty)$.
\end{prop}

Moreover, in \cite{CMS}, an explicit computation of the Fourier transform of the dipolar kernel $K$ defined in \eqref{eq:ker} is given. Precisely, we have the following.
\begin{lemma}\label{lemma:fou-k} The Fourier transform of the dipolar kernel $K$ is given by:
\begin{equation}\label{kernel:fou}
\hat K(\xi)=\frac{4\pi}{3}\frac{2\xi_3^2-\xi_2^2-\xi_1^2}{|\xi|^2}, \qquad \xi\in\R^3.
\end{equation}
Straightforwardly, it follows that $\hat K\in\left[-\frac43\pi,\frac83\pi\right]$.
\end{lemma}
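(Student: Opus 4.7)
The plan is to identify $K$ as (minus) a second derivative of the Newton potential $1/|x|$, reduce to well known Fourier identities, and then extract the range. The only genuinely delicate issue is the distributional interpretation of this identification at the origin, which produces a Dirac mass that must be tracked carefully.

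First I would observe that, for $x\neq 0$, a direct computation yields
\[
\partial_{x_3}^2 \frac{1}{|x|} = \frac{3x_3^2-|x|^2}{|x|^5} = \frac{2x_3^2-x_1^2-x_2^2}{|x|^5} = -K(x).
\]
Thus the pointwise identity $K = -\partial_{x_3}^2(1/|x|)$ holds away from the origin. However, since $K$ is homogeneous of degree $-3$, it is not locally integrable, and one has to interpret $K$ as a tempered distribution given by the principal value of this symbol (well defined because $x_1^2+x_2^2-2x_3^2$ has zero mean over the unit sphere $S^2$, by the standard integrals $\int_{S^2} x_j^2\,dS = \tfrac{4\pi}{3}$).

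Next I would upgrade the pointwise identity to a distributional one. Since $-\Delta(1/|x|)=4\pi\delta$ in $\mathcal S'(\mathbb R^3)$, and since by the $S_3$-symmetry in the coordinates each diagonal second derivative of $1/|x|$ carries the same Dirac contribution, one obtains
\[
\partial_{x_3}^2 \frac{1}{|x|} \;=\; \mathrm{p.v.}\!\left(\frac{3x_3^2-|x|^2}{|x|^5}\right) \,-\, \frac{4\pi}{3}\,\delta \;=\; -K \,-\, \frac{4\pi}{3}\,\delta
\]
as tempered distributions, hence $K = -\partial_{x_3}^2(1/|x|) - \tfrac{4\pi}{3}\delta$. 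Applying the Fourier transform with the convention $\hat f(\xi)=\int f(x)e^{-ix\cdot\xi}dx$, and using the classical formulas $\widehat{1/|x|}(\xi) = 4\pi/|\xi|^2$ and $\widehat{\partial_{x_3}^2 f}(\xi) = -\xi_3^2\,\hat f(\xi)$, one finds
\[
\hat K(\xi) \;=\; \frac{4\pi\,\xi_3^2}{|\xi|^2} - \frac{4\pi}{3} \;=\; \frac{4\pi}{3}\cdot\frac{3\xi_3^2-|\xi|^2}{|\xi|^2} \;=\; \frac{4\pi}{3}\cdot\frac{2\xi_3^2-\xi_1^2-\xi_2^2}{|\xi|^2},
\]
which is exactly \eqref{kernel:fou}. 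Setting $t=\xi_3^2/|\xi|^2\in[0,1]$ gives $\hat K = \tfrac{4\pi}{3}(3t-1)$, whose range is $[-\tfrac{4\pi}{3},\tfrac{8\pi}{3}]$, with the minimum attained when $\xi_3=0$ and the maximum when $\xi_1=\xi_2=0$.

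The main obstacle in the argument is the distributional bookkeeping in the second step: the $-\tfrac{4\pi}{3}\delta$ contribution is precisely what shifts the constant part of $\hat K$ from $0$ to $-\tfrac{4\pi}{3}$, and so it is decisive in obtaining both the correct formula and the correct range. Omitting it would change the claimed bounds and would not even yield a bounded Fourier multiplier of the expected form; everything else is a direct application of standard identities.
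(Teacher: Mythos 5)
Your proof is correct, and it is a genuinely different route from the one the paper cites. The paper delegates the computation to \cite[Lemma~2.3]{CMS}, which obtains $\hat K$ by expanding $e^{-ix\cdot\xi}$ in spherical harmonics (essentially a Bochner/Funk--Hecke type calculation). You instead recognize $K$ as, up to a Dirac mass, the operator $-\partial_{x_3}^2$ applied to the Newton potential, and then read off the Fourier transform from the classical identity $\widehat{1/|x|}=4\pi/|\xi|^2$. The one delicate point you flag, namely the $-\tfrac{4\pi}{3}\delta$ contribution, is handled correctly: it follows from $\Delta(1/|x|)=-4\pi\delta$, the fact that the pointwise second derivatives $\tfrac{3x_j^2-|x|^2}{|x|^5}$ sum to zero away from the origin, and the coordinate-permutation symmetry of $1/|x|$ and $\delta$, which forces the Dirac coefficient to be split equally among the three diagonal second derivatives. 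Your verification of the zero spherical mean of the numerator (via $\int_{S^2}x_j^2\,dS=\tfrac{4\pi}{3}$), which makes the principal value well defined, is also correct. Conceptually, your route is more elementary and has the virtue of making the constant term $-\tfrac{4\pi}{3}$ in $\hat K$ transparently equal to (minus one third of) the weight of the fundamental solution's Dirac mass, rather than appearing as the output of a spherical-harmonics integral; the paper's cited approach is more systematic and extends more readily to general zonal kernels, but requires more machinery. Both give the same formula and the same range $\hat K\in\bigl[-\tfrac{4\pi}{3},\tfrac{8\pi}{3}\bigr]$.
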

\noindent For a proof of \eqref{kernel:fou}, we refer  to  \cite[Lemma 2.3]{CMS}. The explicit calculation of $\hat K,$ is done by means of the decomposition in spherical harmonics of the Fourier character $e^{-ix\cdot\xi}.$ \\

\subsection{Integral estimates for $\mathcal R^4_j$} 
In the next Propositions, we prove some decay estimates -- point-wise and integral ones -- regarding the square and the fourth power of the Riesz transforms when acting on suitably localized functions. Firstly, we disclose a link between the fourth power of the Riesz transform $\mathcal R_j^4$ and  the linear propagator associated to the parabolic biharmonic equation, defined in terms of the Bessel functions. With this correspondence and some decay estimates for the parabolic biharmonic heat kernel we are able to show the decay estimate for $\langle \mathcal R^4_jf, g\rangle.$ Here $\langle \cdot, \cdot \rangle$ stands for the usual $L^2(\R^3)$ inner product.
We start with the integral estimates for the fourth power of the Riesz transform, and, as anticipated above,  we do it by means of some decay properties of the kernel associated to the parabolic biharmonic equation
\begin{equation}\label{eq:biha}
\partial_t w+\Delta^2w=0, \qquad (t,x)\in\R\times\R^3.
\end{equation}
We denote by $P_t$ the linear propagator associated to \eqref{eq:biha}, namely $w(t,x):=P_tw_0(x)$ denotes the solution to the equation \eqref{eq:biha} with initial datum $w_0.$ We begin with the following proposition which provides a representation of $\mathcal R_j^4$ by using the functional calculus. Since now on, we will omit -- unless necessary -- the notation $\R^3$, as we are concerned only with the three-dimensional model.  
\begin{lemma}\label{biharm}
For any two functions in $L^2$ we have the following identity:
\begin{equation}\label{eq:r4}
\langle \mathcal R_i^4f,g\rangle=-\int_0^\infty\langle \partial_{x_i}^4\frac{d}{dt}P_tf,g\rangle t\,dt.
\end{equation}
\end{lemma}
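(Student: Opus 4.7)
The plan is to verify \eqref{eq:r4} on the Fourier side via Plancherel's theorem, exploiting the fact that both sides are easily computed as Fourier multipliers, and reducing the identity to the elementary Laplace--integral computation
\begin{equation*}
\int_0^\infty t\, e^{-ta}\, dt=\frac{1}{a^2}, \qquad a>0.
\end{equation*}

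First I would record the Fourier symbols. With the convention $\mathcal{R}_i$ has symbol $i\xi_i/|\xi|$, the operator $\mathcal{R}_i^4$ is the Fourier multiplier with symbol $\xi_i^4/|\xi|^4$. Since $\partial_t w+\Delta^2 w=0$ integrates to $\widehat{P_tw_0}(\xi)=e^{-t|\xi|^4}\widehat{w_0}(\xi)$, the propagator $P_t$ has symbol $e^{-t|\xi|^4}$, and differentiating in $t$ gives that $\frac{d}{dt}P_t$ has symbol $-|\xi|^4 e^{-t|\xi|^4}$. Combining with $\partial_{x_i}^4\sim\xi_i^4$, the operator $\partial_{x_i}^4\frac{d}{dt}P_t$ acts on the Fourier side as multiplication by $-\xi_i^4\,|\xi|^4\,e^{-t|\xi|^4}$.

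Next, by Plancherel I would write
\begin{equation*}
-\int_0^\infty t\,\bigl\langle \partial_{x_i}^4 \tfrac{d}{dt}P_tf,g\bigr\rangle dt=\int_0^\infty\!\!\int_{\R^3} t\,\xi_i^4\,|\xi|^4\,e^{-t|\xi|^4}\hat{f}(\xi)\overline{\hat{g}(\xi)}\,d\xi\,dt,
\end{equation*}
and apply Fubini to swap the order of integration. Performing the $t$-integral via the identity above with $a=|\xi|^4$ produces the factor $|\xi|^{-8}$, which after cancellation with $|\xi|^4$ yields the integrand $(\xi_i^4/|\xi|^4)\hat f(\xi)\overline{\hat g(\xi)}$, precisely $\langle \mathcal R_i^4 f,g\rangle$ by Plancherel.

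The only non-routine point is justifying Fubini. For $f,g\in L^2$, both $\hat f$ and $\hat g$ lie in $L^2$, hence $\hat f \overline{\hat g}\in L^1$. Since $0\le \xi_i^4/|\xi|^4\le 1$, doing the $t$-integral first in the absolute-value integrand produces $(\xi_i^4/|\xi|^4)|\hat f(\xi)\hat g(\xi)|\le |\hat f(\xi)\hat g(\xi)|$, which is integrable. Thus Fubini applies on $(0,\infty)\times\R^3$, the interchange is legitimate, and the identity \eqref{eq:r4} follows. The main (mild) obstacle is just writing the symbolic computation carefully; everything else is bookkeeping.
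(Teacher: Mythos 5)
Your proof is correct and follows essentially the same route as the paper: pass to the Fourier side via Plancherel, use the elementary Laplace integral $\int_0^\infty t\,e^{-ta}\,dt=a^{-2}$ with $a=|\xi|^4$ (which is the content of the paper's identity \eqref{eq:id4}), and interchange the $t$ and $\xi$ integrals by Fubini--Tonelli. The only difference is cosmetic: you spell out the Fubini justification a little more carefully (noting $\hat f\,\overline{\hat g}\in L^1$ and $0\le\xi_i^4/|\xi|^4\le1$), whereas the paper just invokes Fubini--Tonelli by name.
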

\begin{proof}
By passing in the frequencies space, it is easy to see that $\widehat {P_tf}(\xi):=e^{-t|\xi|^4}\hat f(\xi)$ and we observe, by integration by parts, that 
\begin{equation}\label{eq:id4}
\xi^4_i|\xi|^4\int_0^\infty e^{-t|\xi|^4}t\,dt=\frac{\xi^4_i}{|\xi|^4};
\end{equation}
hence
\[
\begin{aligned}
\int_0^\infty\langle \partial_{x_i}^4\frac{d}{dt}P_tf,g\rangle t\,dt&=\langle\int_0^\infty \partial_{x_i}^4\frac{d}{dt}(P_tf)t\,dt, g \rangle=\langle\int_0^\infty \xi_i^4\frac{d}{dt}(e^{-t|\xi|^4}\hat f)t\,dt, \hat g\rangle\\
&=- \langle \xi_i^4|\xi|^4\hat f\int_0^\infty e^{-t|\xi|^4} t\,dt,\hat g\rangle= -\langle \frac{\xi_i^4}{|\xi|^4}\hat f,\hat g\rangle=-\langle \mathcal R_i^4f,g\rangle,
\end{aligned}
\]
where the change of order of integration (in time and in space) is justified by means of the Fubini-Tonelli's theorem, and we used the Plancherel identity when passing from the frequencies space to the physical space, and vice versa . 

\end{proof}
We are now in position to prove a decay estimate for functions supported outside a cylinder of radius $\gtrsim R.$ In order to do that, we explicitly write the heat kernel of $P_t.$ We introduce, for $t>0$ and $x\in\R^3$ 
\[
p_t(x)=\alpha\frac{k(\mu)}{t^{3/4}}, \quad \mu=\frac{|x|}{t^{1/4}},
\]
and
\[
k(\mu)=\mu^{-2}\int_0^\infty e^{-s^4}(\mu s)^{3/2}J_{1/2}(\mu s)\,ds,
\]
where $J_{1/2}$ is the $\frac12$-th Bessel function, and $\alpha^{-1}:=\frac{4\pi}{3}\int_0^\infty s^2k(s)\,ds$ is a positive normalization constant. We refer to  \cite{FGG} for these definitions and further discussions about the heat kernel of  the parabolic biharmonic equation. 
We recall that the $\frac12$-th Bessel function is given by 
\[J_{1/2}(s)=(\pi/2)^{-1/2} s^{-1/2}\sin(s),
\] 
then  
\[
P_t f(x)=(p_t\ast f)(x)=c\int f(x-y)\int_0^\infty\frac{1}{|y|^3}e^{-ts^4/|y|^4} s\sin{(s)}\,ds\,dy,
\]
and therefore 
\[
\frac{d}{dt}P_t f(x)=-c\int f(x-y)\int_0^\infty\frac{1}{|y|^3}e^{-ts^4/|y|^4} \frac{s^5}{|y|^4}\sin{(s)} \,ds\,dy.
\]
We are ready to prove the following result.
\begin{prop}\label{lemma:decay-r4}
Assume that $f,g\in L^1\cap L^2,$ and that $f$ is supported in $\{|\bar x|\geq \gamma_2R\}$ while $g$ is supported in $\{|\bar x|\leq \gamma_1R\},$ for some positive parameters $\gamma_{1,2}$ satisfying $d:=\gamma_2-\gamma_1>0.$ Then 
\begin{equation}\label{eq:est-r4}
|\langle \mathcal R_i^4f,g\rangle|\lesssim R^{-1}\|g\|_{L^1}\|f\|_{L^1}.
\end{equation}
\end{prop}
\begin{proof}
With the change of variable $s^4|y|^{-4}=\tau$ we get 
\[
\frac{d}{dt}P_t f=-\frac c4\int\int_0^\infty\frac{1}{|y|}e^{-t\tau} \tau^{1/2}\sin(\tau^{1/4}|y|)f(x-y)\,d\tau\,dy
\]
and hence, by a change of variable in space, 
\[
\frac{d}{dt}P_t f=-\frac c4\int\int_0^\infty\frac{1}{|x-y|}e^{-t\tau} \tau^{1/2}\sin(\tau^{1/4}|x-y|)f(y)\,d\tau\,dy.
\]
We will use the following, by adopting the notation  $deg$  for the degree of a polynomial.
\begin{claim}
There exist $M\geq 1$ and $M$ pairs of polynomials $(\tilde q_k,q_k)_{k\in\{1,\dots, M\}}$ with nonnegative coefficients, such that 
\[
\min_{k\in\{1,\dots, M\}}\{deg (q_k)\}\geq 1,
\] 
and satisfying 
\begin{equation*}\label{eq:sin}
\left|\partial_{x_i}^4\left(\frac{1}{|x-y|} \sin(\tau^{1/4}|x-y|)\right)\right|\lesssim \sum_{k=1}^M\frac{\tilde q_k(\tau^{1/4})}{q_k(|x-y|)}.
\end{equation*}
\end{claim}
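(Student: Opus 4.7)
The plan is to control $\partial_{x_i}^4\left(\frac{\sin(\lambda r)}{r}\right)$ where I write $r = |x-y|$ and $\lambda = \tau^{1/4}$, by carrying out the differentiations directly using the chain rule. The three elementary identities that drive everything are $\partial_{x_i} r = \omega_i$, with $\omega_i := (x_i - y_i)/r$, $\partial_{x_i}(r^{-m}) = -m\,\omega_i \, r^{-m-1}$, and $\partial_{x_i}\omega_i = (1 - \omega_i^2)/r$. A crucial observation is that $|\omega_i| \leq 1$, so any power $\omega_i^k$ can be absorbed into the constant without harm.

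The main step is to prove by induction on $n$ the following structural statement: $\partial_{x_i}^n \left(\frac{\sin(\lambda r)}{r}\right)$ is a finite sum of terms of the form
\[
C \, \omega_i^{a} \, \lambda^{b} \, T(\lambda r) \, r^{-c},
\]
where $C \in \R$ depends only on $(n,a,b,c)$, $T$ is either $\sin$ or $\cos$, and the exponents satisfy $a \geq 0$, $0 \leq b \leq n$, and most importantly $c \geq 1$. The inductive base case $n=0$ is trivial. For the inductive step, the product rule splits into three sub-cases: (i) hitting $T(\lambda r)$ produces $\mp \lambda \omega_i T^{\mathrm{opp}}(\lambda r)$, raising $b$ and $a$ each by one while leaving $c$ unchanged; (ii) hitting $r^{-c}$ raises $c$ by one and multiplies by $-c\omega_i$; (iii) hitting $\omega_i^a$ produces contributions $a\omega_i^{a-1}/r$ and $-a\omega_i^{a+1}/r$, both raising $c$ by one. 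In every case $c$ never decreases, so the invariant $c \geq 1$ is preserved.

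Once this decomposition is set at $n=4$, each summand is bounded, using $|\omega_i|, |\sin|, |\cos| \leq 1$, by $|C|\,\lambda^{b}\, r^{-c}$ with $c \geq 1$. Packaging the finitely many resulting monomials as $\tilde q_k(\lambda) := |C_k|\, \lambda^{b_k}$ and $q_k(r) := r^{c_k}$ yields the desired bound
\[
\left|\partial_{x_i}^4\left(\tfrac{\sin(\lambda r)}{r}\right)\right|\lesssim \sum_{k=1}^M\frac{\tilde q_k(\tau^{1/4})}{q_k(|x-y|)},
\]
with $\tilde q_k$ having nonnegative coefficients and $\deg(q_k) = c_k \geq 1$, exactly as claimed.

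The main obstacle is purely notational, namely organising the trigonometric/polynomial bookkeeping through the inductive step so as to verify that the lower bound $c \geq 1$ is genuinely preserved, and not accidentally lowered when two factors of $r$ cancel; there is no analytic subtlety, since the only estimates invoked are the trivial uniform bounds $|\sin|, |\cos|, |\omega_i| \leq 1$.
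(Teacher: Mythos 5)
Your argument is correct, and it takes a cleaner, more structural route than the paper. The paper's proof writes $|x|^{-1}\sin(c|x|)=(f\circ g)(x)$ with $f(r)=r^{-1}\sin(cr)$ and $g(x)=|x|$, computes $f',f'',f''',f''''$ and $\partial_{x_3}g,\dots,\partial_{x_3}^4g$ explicitly, and then plugs everything into the Fa\`a di Bruno formula for the fourth derivative of a composition; the conclusion is read off from the resulting (rather long) expression. You instead differentiate $\sin(\lambda r)/r$ directly in $x_i$ and run an induction on the order $n$ of differentiation, tracking an invariant: every summand is $C\,\omega_i^a\lambda^b T(\lambda r) r^{-c}$ with $T\in\{\sin,\cos\}$ and $c\geq 1$, the key point being that each of the three product-rule branches either leaves $c$ fixed or raises it, so no cancellation of $r$-powers can occur. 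Both are elementary calculus arguments, but yours sidesteps Fa\`a di Bruno entirely, is shorter to verify (one checks a three-case inductive step rather than a five-term composite derivative), and automatically generalizes to $\partial_{x_i}^n$ for arbitrary $n$; what you lose is the explicit closed form of $\partial_{x_3}^4(f\circ g)$ that the paper's computation produces. The final packaging into monomials $\tilde q_k(\lambda)=|C_k|\lambda^{b_k}$, $q_k(r)=r^{c_k}$ with $c_k\geq 1$ matches the claim exactly.
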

At this point, by using the identity \eqref{eq:r4} we infer the following:
\[
\begin{aligned}
|\langle \mathcal R_i^4f,g\rangle|&=\left|\int_0^\infty\langle \partial_{x_i}^4\frac{d}{dt}P_tf,g\rangle t\,dt\right|\\
&=c\left| \int_0^\infty t \int g(x)\left( \int\int_0^\infty \partial_{x_i}^4\left(\frac{1}{|x-y|} \sin(\tau^{1/4}|x-y|)\right)e^{-t\tau}\tau^{1/2}f(y)\,d\tau\,dy\right) \,dx\,dt\right|\\
&\lesssim \int_0^\infty t \int |g(x)|\left( \int \int_0^\infty \sum_{k=1}^M\frac{\tilde q_k(\tau^{1/4})}{q_k(|x-y|)}e^{-t\tau}\tau^{1/2}|f(y)|\,d\tau\,dy\right) \,dx\,dt\\
&=\int_0^\infty t \int |g(x)|\left( \int \int_0^\infty \sum_{k=1}^M\frac{\tilde q_k(\tau^{1/4})}{q_k(|y|)}e^{-t\tau}\tau^{1/2}|f(x-y)|\,d\tau\,dy\right) \,dx\,dt\\
&\leq\int_0^\infty t \int_{\{|\bar x|\leq\gamma_1R\}}|g(x)|\left( \int_{\{|\bar x-\bar y|\geq \gamma_2R\}}\int_0^\infty \sum_{k=1}^M\frac{\tilde q_k(\tau^{1/4})}{q_k(|\bar y|)}e^{-t\tau}\tau^{1/2}|f(x-y)|\,d\tau\,dy\right) \,dx\,dt.
\end{aligned}
\]
Therefore, as the support of $f(x-y)$ is contained in $|\bar x-\bar y|\geq \gamma_2R$ and the one of $g$ is contained in $|\bar x|\leq \gamma_1R,$ we get that $|\bar y|\geq dR.$ Hence, by defining $\beta=\frac14\max_{k\in\{1,\dots,M\}}\{deg(\tilde q_k))\},$ we can bound 
\[
\begin{aligned}
|\langle \mathcal R_i^4f,g\rangle|&\lesssim R^{-1}\|f\|_{L^1}\|g\|_{L^1} \int_0^\infty \int_0^\infty e^{-t\tau}\tau^{1/2}\tilde q_k(\tau^{1/4})\, t\,d\tau\,dt\\
&\sim R^{-1}\|f\|_{L^1}\|g\|_{L^1}\int_1^\infty\int_1^\infty e^{-t\tau}\tau^{1/2}\tau^{(\max_{k\in\{1,\dots,M\}}deg(\tilde q_k))/4}\, t\,d\tau \, dt\\
&= R^{-1}\|f\|_{L^1}\|g\|_{L^1}\int_1^\infty\int_1^\infty e^{-t\tau}\tau^{\beta+1/2}\, t\,d\tau \, dt\\
&\lesssim R^{-1}\|f\|_{L^1}\|g\|_{L^1}
\end{aligned}
\]
where we used the Fubini-Tonelli's theorem. The proof of \eqref{eq:est-r4} is concluded. 
\end{proof}
We  now give the proof of the Claim above. 
\begin{proof}[Proof of Claim 2.3]
As the derivative is invariant under translations and by defining $c=\tau^{1/4},$  we can reduce everything to the estimate of $\partial_{x_i}^4\left(|x|^{-1}\sin(c|x|)\right)$. By setting $f(r)=r^{-1}\sin(cr)$ and $g(x)=|x|$ we can see 
\[
|x|^{-1}\sin(c|x|)=(f\circ g)(x),
\]
and without loss of generality we assume $i=3.$ Then we see $g$ as a function of $x_3$ alone,  i.e. $g(x_3)=\left(x^2_1+x_2^2+x_3^2\right)^{1/2}$.
We first collect some identities. 
\begin{equation*}
\begin{aligned}
f^\prime(r)&=cr^{-1}\cos(cr)-r^{-2}\sin(cr)\\
f^{\prime\prime}(r)&=-c^2r^{-1}\sin(cr)-2cr^{-2}\cos(cr)+2r^{-3}\sin(cr)\\
f^{\prime\prime\prime}(r)&=-c^3r^{-1}\cos(cr)+c^2r^{-2}\sin(cr)+8cr^{-3}\cos(cr)-6r^{-4}\sin(cr)\\
f^{\prime\prime\prime\prime}(r)&=c^4r^{-1}\sin(cr)+2c^3r^{-2}\cos(cr)-10c^2r^{-3}\sin(cr)-30cs^{-4}\cos(cr)+24r^{-5}\sin(cr)
\end{aligned}
\end{equation*}
and 
\begin{equation*}
\begin{aligned}
g^\prime=\partial_{x_3}g(x_3)&=\frac{x_3}{(x_1^2+x_2^2+x_3^2)^{1/2}},\\
g^{\prime\prime}=\partial_{x_3}^2g(x_3)&=\frac{1}{(x_1^2+x_2^2+x_3^2)^{1/2}}-\frac{x_3^2}{(x_1^2+x_2^2+x_3^2)^{3/2}},\\
g^{\prime\prime\prime}=\partial_{x_3}^3g(x_3)&=-\frac{3}{(x_1^2+x_2^2+x_3^2)^{3/2}}+\frac{3x_3^3}{(x_1^2+x_2^2+x_3^2)^{5/2}},\\
g^{\prime\prime\prime\prime}=\partial_{x_3}^4g(x_3)&=-\frac{3}{(x_1^2+x_2^2+x_3^2)^{3/2}}+\frac{18x_3^2}{(x_1^2+x_2^2+x_3^2)^{5/2}}-\frac{15x_3^4}{(x_1^2+x_2^2+x_3^2)^{7/2}}.
\end{aligned}
\end{equation*}
At this point we recall that by the Fa\`a di Bruno's formula 
\[
\begin{aligned}
\partial_{x_3}^4(f\circ g)(x)&=f^{\prime\prime\prime\prime}(|x|)[g^\prime(x)]^4+6f^{\prime\prime\prime}(|x|) g^{\prime\prime}(x)[g^{\prime}(x)]^2+3f^{\prime\prime}(|x|)[g^{\prime\prime}(x)]^2\\
&+4f^{\prime\prime}(|x|) g^{\prime\prime\prime}(x)g^{\prime}(x)+f^\prime(|x|)g^{\prime\prime\prime\prime}(x)
\end{aligned}
\]
and the Claim  easily follows by replacing  $c=\tau^{1/4}$ and translating back $x\mapsto x-y.$ 
\end{proof}

\begin{remark}\label{rmk:2-2}
It is straightforward to observe that in \eqref{eq:id4} we can replace the symbol $\xi_j^4$ with $\xi_k^2\xi_h^2,$ for $k\neq h,$ to get 
\begin{equation}\label{id:r2-2}
\frac{\xi_k^2\xi_h^2}{|\xi|^4}=\xi_k^2\xi_h^2|\xi|^4\int_0^\infty e^{-t|\xi|^4}t\,dt,
\end{equation}
and consequently 
\begin{equation}\label{eq:r2-2}
\langle \mathcal R^2_k\mathcal R_h^2 f,g\rangle=-\int_0^\infty\langle \partial_{x_k}^2\partial_{x_h}^2\frac{d}{dt}P_tf,g\rangle t\,dt
\end{equation}
\end{remark}

The identities \eqref{id:r2-2} and \eqref{eq:r2-2} of the remark above easily imply an analogous of  \autoref{lemma:decay-r4} (by repeating its proof with the obvious modifications) for the operator $\mathcal R^2_k\mathcal R_h^2$ replacing $\mathcal R^4_j.$ More precisely:
\begin{prop}\label{lemma:decay-r2-2}
Assume that $f,g\in L^1\cap L^2,$ and that $f$ is supported in $\{|\bar x|\geq \gamma_2R\}$ while $g$ is supported in $\{|\bar x|\leq \gamma_1R\},$ for some $\gamma_{1,2}>0$ satisfying $d:=\gamma_2-\gamma_1>0.$ Then 
\begin{equation*}
|\langle \mathcal R^2_k\mathcal R_h^2 f,g\rangle|\lesssim R^{-1}\|g\|_{L^1}\|f\|_{L^1}.
\end{equation*}
\end{prop}

\subsection{Pointwise estimate for $\mathcal R^2_j$.}
We turn now the attention to the square of the Riesz transform. In the subsequent results, we will use a cut-off function $\chi$ satisfying the following: $\chi(x)$ is a localization function supported in the cylinder $\{|\bar x|\leq 1\}$  which is nonnegative and bounded, with $\|\chi\|_{L^\infty}\leq 1.$ For a positive parameter $\gamma,$ we define by $\chi_{\{|\bar x|\leq \gamma R\}}$ the rescaled function $\chi(x/\gamma R)$ (hence $\chi_{\{|\bar x|\leq \gamma R\}}$ is bounded, positive and supported in the cylinder of radius $\gamma R$). The proof of the next lemmas is inspired by \cite{LW}.

\begin{prop}\label{lemma:in-out} For any (regular) function $f$ the following point-wise estimate is satisfied: provided $d:=\gamma_2-\gamma_1>0,$ there exists an universal constant $C=C(d)>0$ such that 
\begin{equation}\label{eq:in-out}
|\chi_{\{|\bar x|\leq \gamma_1R\}}(x)\mathcal R_j^2[(1-\chi_{\{|\bar x|\leq \gamma_2R\}})f](x)|\leq CR^{-3}\chi_{\{|\bar x|\leq \gamma_1R\}}(x)\|f\|_{L^1(|\bar x|\geq\gamma_2R)}.
\end{equation}
\end{prop}
We have an estimate similar to \eqref{eq:in-out} if we localize inside a cylinder the function on which $R^2_j$ acts, and we then truncate everything with a function supported in the exterior of  another cylinder. 
\begin{prop}\label{lemma:out-in} For any (regular) function $f$ the following point-wise estimate is satisfied: provided $d:=\gamma_1-\gamma_2>0,$ there exists an universal constant $C=C(d)>0$ such that 
\begin{equation*}\label{eq:out-in}
|(1-\chi_{|\bar x|\leq \gamma_1R})(x)\mathcal R_j^2[(\chi_{\{|\bar x|\leq \gamma_2R\}})f](x)|\leq CR^{-3}|(1-\chi_{\{|\bar x|\leq \gamma_1R\}})(x)|\|f\|_{L^1(|\bar x|\leq \gamma_2R)}.
\end{equation*}
\end{prop}

\begin{proof} The proofs of the Lemmas above are analogous, and they can be given by observing that in the principal value sense, the square of the Riesz transform acts on a function $g$ as 
\[
\mathcal R_j^2g(x)=\iint \frac{x_j-y_j}{|x-y|^{3+1}}\frac{y_j-z_j}{|y-z|^{3+1}}g(z)\,dz\,dy.
\]
Without loss of generality, we consider the case depicted in  \autoref{lemma:in-out}. Let $g(x)=\chi_{\{|\bar x|\geq\gamma_2R\}}(x)f(x).$ Then
\[
\chi_{\{|\bar x|\leq \gamma_1R\}}(x)\mathcal R_j^2g(x)=\iint\left(\frac{y_j}{|y|^{4}}\frac{z_j-y_j}{|z-y|^{4}}\,dy\right)g(x-z)\,dz.
\] 
Since $g$ is supported in the exterior of a cylinder of radius $\gamma_2R,$ we can assume  $|\bar x-\bar z|\geq \gamma_2R,$ and for the function $\chi_{\{|\bar x|\leq \gamma_1R\}}$ is supported by definition in the cylinder of radius $\gamma_1R,$ we can assume  $|\bar x|\leq \gamma_1R:$ therefore we have that $|\bar z|\geq dR.$ This implies that $\{|\bar y|\leq \frac{d}{4}R\}\cap\{|\bar z-\bar y|\leq\frac12|\bar z|\}=\emptyset.$ Indeed, 
\begin{equation*}\label{eq:y}
\frac12|\bar z|\geq|\bar z-\bar y|\geq |\bar z|-|\bar y| \implies |\bar y|\geq\frac12|\bar z|\geq \frac d2R,
\end{equation*}
hence we have the following splitting for the inner integral:
\begin{equation*}\label{eq:int:I}
\begin{aligned}
\int\frac{y_j}{|y|^{4}}\frac{z_1-y_1}{|z-y|^{4}}\,dy&=\int_{|\bar y|\leq\frac d4 R}\frac{y_j}{|y|^{4}}\frac{z_j-y_j}{|z-y|^{4}}\,dy
+\int_{|\bar z-\bar y|\leq\frac12|\bar z|}\frac{y_j}{|y|^{4}}\frac{z_j-y_j}{|z-y|^{4}}\,dy\\
&+\int_{\{|\bar y|\geq \frac d4 R\}\cap\{|\bar z-\bar y|\geq \frac12|\bar z|\}}\frac{y_j}{|y|^{4}}\frac{z_j-y_j}{|z-y|^{4}}\,dy\\
&=\mathcal A+\mathcal B+\mathcal C.
\end{aligned}
\end{equation*}
By using the properties of the domains in this splitting, the proof of the Lemma can be done by straightforward computations, ending up with
\[
\int\frac{y_j}{|y|^{4}}\frac{z_1-y_1}{|z-y|^{4}}\,dy=\mathcal A+\mathcal B+\mathcal C\lesssim R^{-3},
\]
and hence
\[
\begin{aligned}
|\chi_{\{|\bar x|\leq \gamma_1R\}}(x)\mathcal R_j^2g(x)|&=\chi_{\{|\bar x|\leq \gamma_1R\}}(x)\left|\iint\left(\frac{y_j}{|y|^{4}}\frac{z_j-y_j}{|z-y|^{4}}\,dy\right)g(x-z)\,dz\right|\\
&\lesssim R^{-3}\chi_{\{|\bar x|\leq \gamma_1R\}}(x)\int|g(x-z)|\,dz\\
&\lesssim R^{-3}\chi_{\{|\bar x|\leq \gamma_1R\}}(x)\|f\|_{L^1(|\bar x|\geq\gamma_2R)}
\end{aligned}
\] 
which is the estimate stated in \eqref{eq:in-out}. See \cite{BF21} for the details.
\end{proof}

The proofs of \autoref{lemma:decay-r4}, \autoref{lemma:decay-r2-2}, \autoref{lemma:in-out}, and \autoref{lemma:out-in} can be done by using an alternative approach, by means of a general characterization of homogeneous distribution on $\R^n$ of degree $-n,$ coinciding with a regular function in $\R^n\setminus\{0\}.$ Indeed, we have the following (we specialise to the three-dimensional case). For a proof, see \cite{BF21}. In what follows,  `$\hbox{dist}$' denotes the distance function.
\begin{prop}\label{thm:zero-degree}
Let $T$ an operator defined by means of a Fourier symbol $m(\xi),$ which is smooth in $\R^3\setminus\{0\}$ and is a homogenous function of degree zero, i.e. $m(\lambda \xi)=m(\xi)$ for any $\lambda>0.$ For any couple of functions $f,g\in L^1$ having disjoint supports, we have the following estimate:
\begin{equation*}\label{eq:int-decay-general}
|\langle Tf,g\rangle|\lesssim \left(\mbox{dist}(\mbox{supp}(f), \mbox{supp}(g))\right)^{-3}\|g\|_{L^1}\|f\|_{L^1}.
\end{equation*}
\end{prop}
\begin{remark} Keeping in mind the general statement of \autoref{thm:zero-degree}, it is easy for the reader to see  that similar results as in \autoref{lemma:decay-r4}, \autoref{lemma:decay-r2-2}, \autoref{lemma:in-out}, and \autoref{lemma:out-in} can be stated for functions localized outside and inside disjoint balls, instead of disjoint cylinders. Such a localizations for functions supported outside and inside balls will be used for the scattering results using a concentration/compactness and rigidity scheme.
\end{remark}

\subsection{Virial identities}\label{subsec:vir}
The main difference between the Gross-Pitaevskii equation \eqref{GP} and the classical cubic NLS equation, is the non-local character of the nonlinearity, in conjunction  with the fact that the kernel $K$ requires  a more careful treatment with respect to the usual Coulomb or Hartree type kernels.  Hence we spend a few words here, to give an overview on how the  results concernig the decay of powers of the Riesz transforms as in the previous subsections, will play a central role in the  proofs of the  main theorems. We show below how the tools above will be used for both the scattering and blow-up/grow-up results.  \medskip\\
\noindent \textup{(i)} Standard arguments show that, provided \eqref{sc-reg} is satisfied, then the Pohozaev functional $G$ is bounded from below uniformly in time, in particular  there exists a positive $\alpha$ such that $G(u(t))\geq\alpha>0$ for all times in the maximal interval of existence of the solution. Similarly, provided \eqref{blow-reg} holds true, then  $G(u(t))\leq-\delta<0$ for all times in the maximal interval of existence of the solution, for some positive $\delta$. As a byproduct,  $G(u(t))\lesssim -\delta \|u(t)\|_{\dot H^1}^2$.  \medskip\\
\noindent \textup{(ii)} Let $\chi$ a (regular) nonnegative function, which will be well-chosen below. Let us denote by $\chi_R$ the rescaled version of $\chi$, defined by $\chi_R=R^2\chi(x/R)$, and let us introduce the quantity
\begin{equation}\label{vir-1}
V_{\chi_R}(t):=V_{\chi_R}(u(t))=2\int \chi_R(x)|u(t,x)|^2\,dx.
\end{equation}
By formal computations, which can be justified by a classical regularization argument, it is easy to show that 
\begin{equation}\label{eq:intro-vir}
\frac{d^2}{dt^2}V_{\chi_R}(t)=4\int|\nabla u(t)|^2 dx +6 \lambda_1\int|u(t)|^4 dx+H_R(u(t)),
\end{equation}
where the error  $H_R$ is an error term which must be controlled. Our aim is to show that 
\begin{equation}\label{remainder-1}
H_R(u(t))=6\lambda_2\int(K\ast|u(t)|^2)|u(t)|^2\,dx + \epsilon_R
\end{equation}
where $\epsilon_R=o_R(1)$ as $R\to\infty$, uniformly in time in the lifespan of the solution. Let us observe that by glueing together \eqref{eq:intro-vir} and \eqref{remainder-1} we get, by recalling the definition of $G$, see \eqref{def:G},
\[
\frac{d^2}{dt^2}V_{\chi_R}(t)=4G(u(t))+\epsilon_R.
\]
By using the controls on the Pohozaev functional as described in the first  point, and provided that $\epsilon_R$ is made sufficiently small for some $R$ large enough, then we are able to conclude the concentration/compactness and rigidity scheme for the scattering results, or with can close estimates for a convexity argument for the blow-up results. See the next points and the discussions in the next sections.\medskip\\ 
\noindent \textup{(iii-a)} As for the scattering part, let us mention for sake of clarity that the aim  of the concentration/compactness and rigidity scheme is to prove that all solutions arising from initial data satisfying \eqref{sc-reg} are global and scatter. Let us recall that small initial data lead to global and scattering solutions, by a standard perturbative argument. The Kenig and Merle's road map (see \cite{KM1, KM2}) then proceeds as follows: suppose that the threshold for global and scattering solutions is strictly smaller than the claimed one (i.e. $E(Q)M(Q)$); then, by means of a profile decomposition Theorem,  it is possible to construct a minimal global non-scattering solution at  the threshold energy. Moreover such a solution, called soliton-like solution and denoted by $u_{crit}$, is precompact in the energy space  up to a continuous-in-time translation path $x(t)$, i.e. $\{u(t,x+x(t))\}_{t\in\R^+}$ is precompact in $H^1$. The crucial fact is that such a path $x(t)$ grows sub-linearly at infinity, and this will rule out the existence of such a soliton-like solution. This latter fact is proved by using the precompactness of the (translated) flow in conjunction with a virial argument, along with the already mentioned growth property of $x(t)$. 

For the virial argument in this context, we choice $\chi$ to be a cut-off function such that $\chi(x)=|x|^2$ on $|x|\leq1$ and $supp\,(\chi)\subset B(0,2)$ (namely, we consider a localized version  of \eqref{defi-V}). We get 
\begin{equation}\label{vir-3}
\begin{aligned}
\frac{d^2}{dt^2}V_{\chi_R}(t)&=4\int |\nabla u(t)|^2\,dx+6\lambda_1\int |u(t)|^4\,dx +\varepsilon_{1,R}\\
&-2\lambda_2R\int\nabla\chi\left(\frac xR\right)\cdot\nabla\left(K\ast|u(t)|^2\right)|u(t)|^2\,dx,\
\end{aligned}
\end{equation}
where 
\begin{equation}\label{rem-eps1}
\varepsilon_{1,R} = C\left(\int_{|x|\geq R}|\nabla u(t)|^2+R^{-2}|u(t)|^2+|u(t)|^4\,dx\right).
\end{equation}
The quantity $\varepsilon_1(R)$ can be made small, uniformly in time, for $R$ sufficiently large, by using the precompactness of the soliton-like solution constructed with the concentration/compactness scheme. To handle $\Lambda:=-2\lambda_2R\int\nabla\chi\left(\frac xR\right)\cdot\nabla\left(K\ast|u(t)|^2\right)|u(t)|^2\,dx$  in \eqref{vir-3}, we perform a splitting in space of the solution $u(t,x)$  by considering its cut-off inside and outside a ball of radius $\sim R$, eventually obtaining the identity   $\Lambda=6\lambda_2\int(K\ast|u(t)|^2)|u(t)|^2\,dx + \varepsilon_{2,R}$.
It is after the splitting above that we can reduce to a term $\varepsilon_{2,R}$  which fulfils the hypothesis of the point-wise decay of the Riesz transforms as in the previous Section; indeed, with such a localized functions, we can lead back our term $\varepsilon_{2,R}$ in the framework of \autoref{lemma:in-out} and \autoref{lemma:out-in}.   By letting $\epsilon_R:=\varepsilon_{1,R}+\varepsilon_{2,R}$, for $R$ sufficiently large, we get 
\[
\frac{d^2}{dt^2}V_{\chi_R}(t)=4G(u(t))+\epsilon_R\geq 2\alpha,
\]
where we used the strictly positive lower bound for $G$ as described in point $\textup{(i)}$. This latter estimate, in conjunction with the the sub-linear growth  of $x(t)$ will give a contradiction, hence the soliton-like solution cannot exist, and therefore the threshold for the scattering is given precisely by the quantity as in \eqref{sc-reg}.  \medskip\\
\noindent \textup{(iii-b)}   As for the blow-up in finite time, the last part of strategy can be considered similar, as it is given  by a Glassey argument based on virial identities. Nonetheless, the analysis is different and  more complicated with respect two points of the rigidity part for the scattering theorems. 
Specifically, in the formation of singularities scenario, we cannot rely on some compactness property on the nonlinear flow, hence the control on the remainder $H_R(u(t))$ cannot be given in a full generality. This is why we have to assume some symmetry hypothesis on the solution. It is here that we need to introduce in the framework $\Sigma_3$, the space of cylindrical symmetric solutions, with finite variance only along the third axis direction. 
Let us recall that even for the classical cubic NLS, (i.e. $\lambda_1=-1$ and $\lambda_2=0$), it is an open problem to show blow-up without assuming any additional symmetry hypothesis of finiteness of the variance, see \cite{AkN, HR, Gla, KRRT, HR07, HR2010, DWZ, OT, Guevara, Kav}.  Here we give the minimal assumptions to obtain formation of singularities in finite time, i.e. the solution is in $\Sigma_3$. See also \cite{Mar} for an early work on NLS in anisotropic spaces, and \cite{BFG-21, DF-ZAMP, ADF} for these techniques applied to other dispersive models. \medskip

For the virial argument, we chose here a (rescaled) function $\chi_R$ as the sum of a rescaled localization function  $\rho_R$, plus the function $x_3^2$. Here  $\rho_R$ is a well-constructed  function depending only on the two variables  $\bar x=(x_1,x_2)$ which provides a localization in the exterior of a cylinder, parallel to the $x_3$ axis and with radius of size $|\bar x|\sim R.$   The notation $|\bar x|$ clearly stands for $|\bar x|:=(x_1^2+x_2^2)^{1/2}.$
Moreover we added the not-localized  function $x_3^2$ in order to obtain a virial-like estimate of the form
\begin{equation*}\label{eq:intro-vir-bis}
\frac{d^2}{dt^2}V_{\rho_R+x_3^2}(t)\leq 4\int|\nabla u(t)|^2 dx +6 \lambda_1\int|u(t)|^4 dx+H_R(u(t)),
\end{equation*}
where the error  $H_R$ is defined by
\[
\begin{aligned}
H_R(u(t))&=4\lambda_1 \int a_R(\bar x)|u(t)|^4\, dx+cR^{-2}\\
&+2\lambda_2\int\nabla\rho_R\cdot\nabla\left(K\ast|u(t)|^2\right)|u(t)|^2\,dx-4\lambda_2\int x_3\partial_{x_3}\left(K\ast|u(t)|^2\right)|u(t)|^2\,dx,
\end{aligned}
\]
and $a_R(\bar x)$ is a bounded, nonnegative function supported in the exterior of a cylinder of  radius of order $R$.  We estimate  $\int a_R(\bar x)|u|^4\, dx= o_R(1)\|u(t)\|_{\dot H^1}^2$ by means of a suitable Strauss embedding. Hence it remains to estimate the non-local terms in $H_R(u(t))$.
Similarly to the scattering part, the strategy is to split $u(t,x)$ by separating it in the interior and in the exterior of a cylinder, instead of a ball,  and computing the interaction given by the dipolar term.  The  further difficulty (with respect to the virial argument for the scattering theorem) is that $K\ast \cdot$ is not supported inside any cylinder, even if we localized the function where $K$ is acting on through the convolution.  Therefore, by performing further suitable splittings, we are able to give the identity
\[
\Lambda=6\lambda_2\int (K\ast|u(t)|^2)|u(t)|^2\,dx+ \epsilon_R,
\]
where the contributes defining $\epsilon_R$ consist of  terms of the form $\langle \mathcal R_3^4f,g\rangle$ when $f$ is supported in $\{|\bar x|\geq \gamma_2R\}$ while $g$ is supported in $\{|\bar x|\leq \gamma_1R\},$ for some positive parameters $\gamma_1$ and $\gamma_2$ satisfying $d:=\gamma_2-\gamma_1>0.$ Clearly, the localizations of $u(t,x)$ play the role of $f,g$ above. Hence, by means of  \autoref{lemma:decay-r4}  we can conclude, provided $R$ is large enough, with 
\[
\frac{d^2}{dt^2}V_{\rho_R+x_3^2}(t)\leq 4G(u(t))+\epsilon_{R}\leq -2\delta,
\] 
which in turn implies the finite time blow-up via a Glassey convexity argument \cite{Gla}. Note that we used the strictly negative upper bound for $G$ as described in point $\textup{(i)}$. 

\section{Sketch of the proofs below the threshold}

\subsection{Scattering}\label{subsec:sca} As already mentioned in point $\textup{(iii-a)}$, the scattering result given in \autoref{theo-scat-BF} is given by running a concentration/compactness and rigidity scheme, as pioneered by Kenig and Merle in their celebrated works \cite{KM1,KM2}. Nowadays there is a huge  literature  on this method, applied to several dispersive models, and since the scope of this review paper is not to go over the details of these techniques, we refer the reader to  \cite{AkN, CFX, FV, Guevara, DHR,HR} for mass-energy intracritical NLS equations. Let us only mention that the method can be viewed as an induction of the energy method, and it proceeds by contradiction, by assuming the the threshold for global and scattering solutions is strictly smaller than the claimed one. Hence we define the threshold for scattering as follows:
\begin{align*}
\mathcal{ME}=\sup&\left\{ \delta : \right. M(u_0)E(u_0)<\delta \hbox{ and } \|u_0\|_{L^2}\|\nabla u_0\|< \|Q\|_{L^2}\|\nabla Q\|
_{L^2} \\\notag
&\quad \hbox{then the solution to \eqref{GP} with initial data } u_0 \textit{ is in } L^8L^4\left.\right\}.
\end{align*}
A classical small data theory gives that if the initial datum is small enough in the energy norm, then the corresponding solution scatters, or equivalently it belongs to $L^8L^4:=L^8_t(\R; L^4_x(\R^3))$. Therefore the threshold is certainly strictly  positive. The goal is therefore to prove that $\mathcal{ME}=M(Q)E(Q)$. \medskip
At this point we assume by contradiction that the threshold is strictly smaller than the given one (i.e. we assume $\mathcal{ME}<M(Q)E(Q)$  and we  eventually prove that the latter leads to a contradiction). 

Indeed,  a linear profile decomposition theorem tailored for the equation \eqref{GP}, see \cite[Theorem 4.1, Proposition  4.3, and Corollary 4.4]{BF19}, and the existence of the wave operator, enable us to establish the following. 
\begin{theorem}\label{lemcri}
There exists a not trivial initial profile $u_{crit}(0)\in H^1$
with $M(u_{crit}(0))E(u_{crit}(0))=\mathcal{ME}$ and $ \|u_{crit}(0)\|_{L^2}\|\nabla u_{crit}(0)\|< \|Q\|_{L^2}\|\nabla Q\|
_{L^2} $ such that  the corresponding solution $u_{crit}(t)$ to \eqref{GP} is globally defined and does not scatter. Moreover, there exists a continuous function $x(t) : \R^+\mapsto\R^3$ such that $\{u_{crit}(t,x+x(t)),\,t\in\R^+\}$ is precompact as a subset of $H^1.$ Such a function $x(t)$ satisfies $|x(t)|=o(t)$ as $t\to+\infty$, namely it grows sub-linearly  at infinity.
\end{theorem}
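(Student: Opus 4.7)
The plan is to carry out the Kenig--Merle concentration-compactness/rigidity scheme. The three ingredients needed, and all available in this setting, are the linear profile decomposition (\cite[Theorem 4.1]{BF19}), the existence of wave operators, and a nonlinear stability/perturbation lemma for \eqref{GP} built on Strichartz estimates and the $L^p\to L^p$ boundedness of the dipolar convolution (\autoref{prop:cont-K}). Start by selecting a minimizing sequence $\{u_{0,n}\}\subset H^1(\R^3)$ with $\|u_{0,n}\|_{L^2}\|\nabla u_{0,n}\|_{L^2}<\|Q\|_{L^2}\|\nabla Q\|_{L^2}$, $M(u_{0,n})E(u_{0,n})\downarrow \mathcal{ME}$, and such that the corresponding solutions $u_n(t)$ to \eqref{GP} do not scatter, i.e.\ $\|u_n\|_{L^8_tL^4_x}=+\infty$. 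Apply the linear profile decomposition
\[
u_{0,n}=\sum_{j=1}^{J}e^{it_n^j\frac12\Delta}\phi^j(\cdot-x_n^j)+W_n^J,
\]
with pairwise orthogonal parameters $(t_n^j,x_n^j)$ and a remainder whose linear evolution has asymptotically vanishing $L^8_tL^4_x$ norm as $J\to\infty$. The Pythagorean decoupling of the mass and the kinetic energy, together with the continuity of the potential functional $P$ under the profile expansion (a consequence of \autoref{prop:cont-K}), gives $M(u_{0,n})=\sum_j M(\phi^j)+M(W_n^J)+o_n(1)$ and an analogous decoupling of the energy.

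Next, I would show that exactly one profile survives. If $t_n^j\to\pm\infty$ the wave operator replaces the linear profile with a nonlinear one having the same asymptotic behaviour; in all cases each profile gives rise to a nonlinear solution $v^j$ whose mass-energy is strictly less than $\mathcal{ME}$ (unless it coincides with the single non-trivial profile). By the definition of $\mathcal{ME}$ each such $v^j$ scatters. The orthogonality of the parameters then lets one construct an approximate solution to \eqref{GP} by superimposing the $v^j$'s and invoke the nonlinear perturbation lemma to conclude that $u_n$ itself scatters for large $n$, contradicting the choice of the minimizing sequence. Hence $J=1$, $W_n^1\to 0$ in $H^1$, $t_n^1$ can be normalized to $0$ up to wave operator, and we define $u_{crit}(0):=\phi^1$. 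By construction $M(u_{crit}(0))E(u_{crit}(0))=\mathcal{ME}$, the sub-threshold kinetic bound persists, and the flow $u_{crit}(t)$ is global and non-scattering; global existence on $\R^+$ follows because the energy/mass inequalities are preserved along the flow (by the coercivity argument recalled in point $\textup{(i)}$ of Section~\ref{subsec:vir}).

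For the precompactness statement, I would apply the same extraction procedure to any sequence $u_{crit}(t_n)$ with $t_n\to+\infty$: the translated sequence still saturates $\mathcal{ME}$ and fails to scatter, so the same one-profile rigidity forces the existence of $x_n\in\R^3$ along which $u_{crit}(t_n,\cdot+x_n)$ converges strongly in $H^1$. This yields the compactness of $\{u_{crit}(t,\cdot+x(t))\}_{t\in\R^+}$, and a standard selection argument upgrades $x(\cdot)$ to a continuous path.

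The main obstacle is the sub-linear growth $|x(t)|=o(t)$. The argument is by contradiction: suppose there exists $t_n\to\infty$ and $\eta>0$ with $|x(t_n)|\geq \eta\, t_n$. First one uses the Galilean invariance of \eqref{GP} — which is preserved by the dipolar nonlinearity, since $K\ast|u|^2$ is real and unchanged under the boost $u\mapsto e^{i\xi\cdot x}u$ up to a phase — to reduce to the zero-momentum case $P_0:=\IM\int \bar u_{crit}\nabla u_{crit}\,dx=0$. Introduce the truncated momentum
\[
Z_R(t):=\IM\int \varphi_R(x)\,\bar u_{crit}(t,x)\,\nabla u_{crit}(t,x)\,dx,
\]
with $\varphi_R(x)=R\,\varphi(x/R)$ for a smooth cutoff $\varphi$ equal to $x$ on $|x|\leq 1$. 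Two facts together produce the contradiction: on the one hand the precompactness of the translated flow implies that the tail integrals outside a ball of radius $R$ centered at $x(t)$ are uniformly small, so $Z_R(t)-x(t)\cdot P_0\to 0$ as $R/|x(t)|\to\infty$; on the other hand a direct computation together with conservation of mass/energy bounds $|Z_R(t)|\lesssim R$ uniformly in $t$. Combining $P_0=0$ with these two bounds forces $|x(t)|=o(R)$ for every $R$ large, hence $|x(t_n)|/t_n\to 0$, contradicting the assumption. The delicate point, which is the real work, is justifying that the dipolar contribution in $\tfrac{d}{dt}Z_R$ is controllable uniformly in $R$, which again reduces, via a spatial splitting of $u_{crit}$ inside/outside balls of radius $\sim R$, to the integral decay of Riesz-type operators established in \autoref{lemma:decay-r4}, \autoref{lemma:decay-r2-2}, and \autoref{thm:zero-degree}.
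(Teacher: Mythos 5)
The construction of the critical element and its precompactness up to a continuous translation path follows the standard Kenig--Merle induction-on-energy road map, which is precisely what the paper relies on; up to that point your sketch is consistent with the strategy of \cite{BF19} and requires no further comment.

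The gap is in the sub-linear growth argument. You introduce
\[
Z_R(t)=\IM\int \varphi_R(x)\,\overline{u}_{crit}\,\nabla u_{crit}\,dx ,\qquad \varphi_R(x)=R\varphi(x/R),\ \varphi\equiv x \text{ on }|x|\le1,
\]
which (up to the cut-off) is a truncation of the dilation generator $\IM\int x\cdot\overline{u}\,\nabla u\,dx$, i.e.\ half of $\tfrac{d}{dt}V_{\chi_R}$, not a quantity that encodes the position $x(t)$. After the Galilean reduction to $P_0=\IM\int\overline{u}\,\nabla u\,dx=0$, the two facts you establish — $Z_R(t)-x(t)\cdot P_0\to 0$ (hence $Z_R\to 0$) and $|Z_R|\lesssim R$ — are both consistent with $|x(t)|$ growing at \emph{any} rate, so the conclusion ``$|x(t)|=o(R)$ for every large $R$'' does not follow. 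In fact $Z_R$ is the quantity appearing in the \emph{rigidity} step (Section~3.1, where the inequality $c(T_1-T_0)\le R$ is produced), and in that step the sub-linear growth of $x(t)$ is an \emph{input} (used to choose $R$), not an output. The argument you sketch is therefore circular.

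The quantity actually needed, as in \cite{DHR}, is a truncated \emph{center of mass},
\[
X_R(t)=\int\theta\!\left(\tfrac{x}{R}\right)x\,|u_{crit}(t,x)|^2\,dx .
\]
By the continuity equation $\partial_t|u|^2=-\nabla\cdot\IM(\overline{u}\,\nabla u)$ (which holds here because $\lambda_1|u|^2+\lambda_2(K\ast|u|^2)$ is a real potential), integrating by parts gives $\tfrac{d}{dt}X_R\approx \IM\int\theta(x/R)\overline{u}\,\nabla u\,dx+$ boundary terms, which tends to $P_0=0$ when $R\gg |x(t)|+R_\varepsilon$; on the other hand precompactness gives $X_R(t)\approx x(t)M(u_{crit})$ in the same regime. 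Assuming $|x(t_n)|\ge\eta t_n$ and integrating from $0$ to $t_n$ with a suitable choice $R_n\sim|x(t_n)|$ then yields the contradiction. Note also that, because only the continuity equation is used, no control on the dipolar term via \autoref{lemma:decay-r4}, \autoref{lemma:decay-r2-2}, or \autoref{thm:zero-degree} is needed here: the Riesz decay estimates enter only in the separate virial/rigidity step, where the full $\tfrac{d^2}{dt^2}V_{\chi_R}$ (and hence the nonlinear stress) appears.
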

The Theorem above  says that by assuming $\mathcal{ME}<M(Q)E(Q)$, we are able to construct an initial datum whose non-linear evolution is global and non-scattering. The precompactness  tells us that $u_{crit}(t)$ remains spatially localized (uniformly in time) along the continuous path $x(t)\in\R^3$. Specifically, for any $\varepsilon>0$ there exists $R_{\varepsilon}\gg1$ such that 
\begin{equation}\label{small-prec}
\int_{|x-x(t)|\geq R_{\varepsilon}}|\nabla u_{crit}(t)|^2+|u_{crit}(t)|^2+|u_{crit}(t)|^4 \leq\varepsilon \qquad \hbox{for any }\quad t\in\R^+.
\end{equation}
The proof of the growth property of $x(t)$ is inspired by \cite{DHR}, and it is based on  Galilean transformations of the solution.

The conclusion of the Kenig-Merle scheme is reached provided  we can  show that the solution given in \autoref{lemcri} cannot exist. Indeed, as introduced in  \autoref{subsec:vir} (iii-a), a virial argument will give, see \eqref{vir-3} 
\begin{equation*}\label{vir-31}
\begin{aligned}
\frac{d^2}{dt^2}V_{\chi_R}(t)&=4\int |\nabla u(t)|^2\,dx+6\lambda_1\int |u(t)|^4\,dx +\varepsilon_{1,R}\\
&-2\lambda_2R\int\nabla\chi\left(\frac xR\right)\cdot\nabla\left(K\ast|u(t)|^2\right)|u(t)|^2\,dx. 
\end{aligned}
\end{equation*}
The main goal is therefore to estimate  the non-local contribution $\Lambda:=-2\lambda_2R\int \nabla\chi\left(\frac xR\right)\cdot\nabla\left(K\ast|u(t)|^2\right)|u(t)|^2\,dx$. 
We introduce the  space localization inside and outside a ball of radius $10R$, namely we write (we ignore the time dependence)
\[u=\bold 1_{\{| x|\leq 10R\}}u+\bold 1_{\{|x|\geq 10R\}}u:=u_i+u_o.
\] 
By using the disjointness of the supports, we can rewrite $\Lambda=\Lambda_{i,i}+\Lambda_{o,i}$. In the latter notation, the subscript $\Lambda_{\diamond, \star}$, for $\diamond, \star\in\{i,o\}$, means the following: after quite lengthy and tricky manipulations, the terms we are considering are of the form:
\[
\Lambda_{\diamond, \star}(u)=\int g\left(K\ast|u_\diamond|^2\right)h(|u_\star|^2)\,dx,
\] 
i.e. the dipolar kernel $K$ acts (via the convolution) on the localisation (of $u$) given by the first symbol $\diamond$, while the other term in the integral contains the term localized according to the symbol $\star$. With a careful handling of the expression above, we reduce everything to fulfil the hypothesis of \autoref{lemma:in-out} and \autoref{lemma:out-in}, leading to the final estimate
\[
\Lambda\geq 6\lambda_2\int (K\ast|u(t)|^2)|u(t)|^2\,dx + \epsilon_{2,R}
\]
with
		\begin{equation}
		\begin{aligned} \label{est-A-R}
		\varepsilon_{2,R}&\lesssim  R^{-1} +R^{-1}\|u(t)\|^2_{H^1} \|u(t)\|^2_{L^4(|x| \geq10R)} + R^{-1} \|u(t)\|^2_{H^1} \\
		&+\|u(t)\|^2_{L^4(|x| \geq 10R)} +\|u(t)\|^4_{L^4(|x| \geq10R)}.
\end{aligned}
\end{equation}

\noindent Let us observe that the remainder as in \eqref{est-A-R} has a similar form as the one in \eqref{rem-eps1} describing $\varepsilon_{1,R}$. Hence they can be controlled in the same fashion. Specifically, we fix a time interval $[T_0,T_1]$  for $0<T_0<T_1$ and we take $R\geq \sup_{[T_0,T_1]}|x(t)|+R_{\varepsilon}$ as in \eqref{small-prec} such that $\frac{d^2}{dt^2}z_R(t)\geq \frac\alpha2>0$. An integration on $[T_0,T_1]$ yields to 
\begin{equation*}\label{exti-1}
R\gtrsim R\|u\|_{L^2}\|\nabla u\|_{L^2}\gtrsim\left|\frac{d}{dt}z_R(T_1)-\frac{d}{dt}z_R(T_0)\right|\geq \frac\alpha2(T_1-T_0),
\end{equation*}
i.e. for come $c>0$, we have $c(T_1-T_0)\leq R$.  Note that by the sub-linearity growth of $x(t)$, once fixed  $\delta>0$, we can guarantee that there exists a time $t_{\delta}$ such that $|x(t)|\leq \delta t$ for any $t\geq t_{\delta}$. Hence, by picking $\delta=c/2$, and $R=R_\varepsilon+\frac{cT_1}{2}$, we have $\frac{cT_1}{2}\leq R_\varepsilon+cT_0$, and the latter leads a contradiction, as we can let $T_1$ be as large as we want, while the right hand-side remains bounded. 

\subsection{Blow-up} As introduced in the \autoref{subsec:vir}, our goal is to give the following estimate:
\begin{equation}\label{eq:intro-vir2}
\frac{d^2}{dt^2}V_{\rho_R+x_3^2}(t)\leq 4\int|\nabla u(t)|^2 dx +6 \lambda_1\int |u(t)|^4 dx+H_R(u(t)),
\end{equation}
where the error  $H_R$ is defined by
\begin{equation}\label{eq:rema}
\begin{aligned}
H_R(u(t))&=4\lambda_1 \int a_R(\bar x)|u(t)|^4\, dx+cR^{-2}\\
&+2\lambda_2\int \nabla\rho_R\cdot\nabla\left(K\ast|u(t)|^2\right)|u(t)|^2\,dx-4\lambda_2\int x_3\partial_{x_3}\left(K\ast|u(t)|^2\right)|u(t)|^2\,dx 
\end{aligned}
\end{equation}
and $a_R=0$ in $\{|\bar x|\leq R\}$. To this aim, we consider a regular, nonnegative, radial function $\rho=\rho(|\bar x|)=\rho(r)$ such that
\[
\rho(r)=
\begin{cases}
r^2 &\hbox{ if }\, r\leq1\\
0  &\hbox{ if }\, r\geq2
\end{cases},
\quad \hbox{such that} \quad \rho^{\prime\prime}\leq2 \quad\hbox{for any} \quad r\geq0.
\] 
A similar function can be explicitly constructed, see \cite{Mar, BF21, OT}, and satisfies \eqref{eq:intro-vir2}, \eqref{eq:rema}, with $a_R$ localized in the exterior of a cylinder of radius $R$.  By means of Strauss estimates, it is quite easy to obtain
\begin{align*}
H_R(u(t))&=2\lambda_2 \left(\int \nabla\rho_R\cdot\nabla\left(K\ast|u(t)|^2\right)|u(t)|^2\,dx-2\int x_3\partial_{x_3}\left(K\ast|u(t)|^2\right)|u(t)|^2\,dx \right)\\
&+o_R(1)\|u(t)\|_{\dot H^1}^2:=2\lambda_2( \Xi+ \Upsilon)+o_R(1)\|u(t)\|_{\dot H^1}^2.
\end{align*}
So we are reduced to the estimate of $\Xi+\Upsilon.$ In order to use the decays as in \autoref{sec-riesz}, we proceed with several localizations, in order to reduce the problems given by the non-local terms $\Xi+\Upsilon$ to fulfil the hypothesis of the decay properties for powers of the Riesz transforms. The scheme is as follows. We introduce the first localization inside and outside a cylinder of radius $10R$, namely we write (we ignore the time dependence)
\[u=\bold 1_{\{|\bar x|\leq 10R\}}u+\bold 1_{\{|\bar x|\geq 10R\}}u:=u_i+u_o.
\] 
By using the disjointness of the supports, we can rewrite $\Xi+\Upsilon=\Xi_{o,i}+\Xi_{i,i}+\Upsilon.$ The proof of the decay for $\Xi_{o,i}$ can be given, after careful manipulations, by means of the point-wise decay as in  \autoref{lemma:in-out} and \autoref{lemma:out-in}. The main problem is given by the term $\Xi_{i,i}+\Upsilon$ where we do not have any localization at the exterior of a cylinder, preventing us to obtain some decay straightforwardly. Hence a further splitting is introduced. 
We separate $u_i$ as $u_i=w_{i,i}+w_{i,o},$ where 
\[
w_{i,i}=\bold 1_{\{|\bar x|\leq R/10\}}u_i \quad \hbox{ and } \quad w_{i,o}=\bold 1_{\{|\bar x|\geq R/10\}}u_i=\bold 1_{\{R/10\leq|\bar x|\leq 4R\}}u.
\]
Therefore, we generate terms localized outside a cylinder  of radius $\sim R$,  specifically of the form 
\begin{align*}
\mathcal A_{i, o}(u_i)&=\int g\left(K\ast|w_{i,i}|^2\right)h(|w_{i,o}|^2)\,dx,\\
\mathcal B_{o, o}(u_i)&=\int \tilde g\left(K\ast|w_{i,o}|^2\right)\tilde h(|w_{i,o}|^2)\,dx,
\end{align*}
plus a quantity 
\[
\mathcal C_{i,i}(u)+\Upsilon:=2\int\bar x\cdot\nabla_{\bar x}\left(K\ast|u_i|^2\right)|u_i|^2\,dx+2\int x_3\partial_{x_3}\left(K\ast |u|^2\right)|u|^2\,dx.
\]
By continuing the (quite involved) computations, we end up with a reduction of $\mathcal A_{i, o}(u_i)$ and $\mathcal B_{o, o}(u_i)$ to a  framework as in \autoref{lemma:in-out} and \autoref{lemma:out-in}, and we get 
\begin{equation*}
\mathcal A_{i, o}(u_i)= o_R(1)\|u\|_{\dot H^1}^2\quad \hbox{ and } \quad \mathcal B_{o, o}(u_i)= o_R(1)\|u\|_{\dot H^1}^2.
\end{equation*}

\noindent In order to control the remainder term  $\mathcal C_{i,i}(u)+\Upsilon$ and  to make appear    $6\lambda_2\int (K\ast|u|^2)|u|^2\,dx$ in \eqref{eq:intro-vir2} that will yield to the whole quantity $4G(u(t)),$ we need to use the identity $2\int x\cdot\nabla\left(K\ast f\right)f\,dx=-3\int\left(K\ast f\right)f\,dx$. The latter follows from the relation $\xi\cdot \nabla_\xi\hat K=0.$   By observing that 
\[
\begin{aligned}
\mathcal C_{i,i}(u)+\Upsilon&=3\int \left(K\ast|u_i|^2\right)|u_i|^2\,dx-2\int x_3\partial_{x_3}\left(K\ast|u_i|^2\right)|u_o|^2\,dx\\
&-2\int x_3\partial_{x_3}\left(K\ast|u_o|^2\right)|u_i|^2\,dx-2\int x_3\partial_{x_3}\left(K\ast|u_o|^2\right)|u_o|^2\,dx
\end{aligned}
\]
and that 
\[
\xi_3\partial_{\xi_3}\hat K=8\pi\frac{\xi_3^2(\xi_1^2+\xi_2^2)}{|\xi|^4}=8\pi\left(\frac{\xi_3^2}{|\xi|^2}-\frac{\xi_3^4}{|\xi|^4}\right)=8\pi \widehat{\mathcal{R}_3^2}-8\pi\widehat{\mathcal{R}_3^4},
\]
we reduce the problem to the estimate of $\langle \mathcal R_3^4f,g\rangle_{L^2}$ when $f$ is supported in $\{|\bar x|\geq \gamma_2R\}$ while $g$ is supported in $\{|\bar x|\leq \gamma_1R\},$ for some positive parameters $\gamma_1$ and $\gamma_2$ satisfying $d:=\gamma_2-\gamma_1>0.$ Note that in the latter identity we used the fact that  $\frac{\xi_3^2}{|\xi|^2}$ and $\frac{\xi_3^4}{|\xi|^4}$ are (up to constants) the symbols, in Fourier space, of the operators $\mathcal{R}_3^2$ and $\mathcal{R}_3^4$, respectively. $\mathcal R_j^4$ denotes the fourth power of the Riesz transform, and $\widehat{\mathcal{R}_j^4}$ its symbol in Fourier space. 
At this point we use the estimate of \autoref{lemma:decay-r4} (for the contribution involving  $\mathcal{R}_j^4$ ), and again \autoref{lemma:in-out} and \autoref{lemma:out-in} (for the contribution involving $\mathcal{R}_j^2$). 
Thus, by summing up together the  estimates  we have 
\begin{equation*}\label{eq:vir3}
\frac{d^2}{dt^2}V_{\rho_R+x_3^2}(t)\leq 4G(t)+o_R(1)\|u(t)\|_{\dot H^1}^2\lesssim -1,
\end{equation*}
which allows to close a Glassey-type  convexity argument.

\subsection{Grow-up}
We give now the proof of the grow-up result, by sketching the proof of  \autoref{theo-blow-crite}. The proof follows the approach by Du, Wu, Zhang, see \cite{DWZ} (see also the results by Holmer and Roudenko \cite{HR2010}). It is done by contradiction, and it makes use of the so-called almost finite propagation speed, which enables us to control the quantity  $\|u(t)\|_{L^2(|x| \gtrsim R)}$ for sufficiently large times. It is well-known that, contrary to the wave equation, the Schr\"odinger equation doesn't enjoy a finite propagation speed; nonetheless, we can claim the following: provided $\sup_{t\in [0,\infty)} \| u(t)\|_{\dot H^1} <\infty$, then for any $\eta>0$, there exists a constant $C>0$ independent of $R$ such that for any $t\in [0,T]$ with $T:= \frac{\eta R}{C}$,
\begin{equation} \label{est-L2-norm}
\int_{|x|\gtrsim R} |u(t,x)|^2 dx \leq \eta + o_R(1).
\end{equation}
Indeed, let $\vartheta$ be a smooth radial function satisfying
\[
\vartheta (x) = \vartheta(r) = \left\{
\begin{array}{ccc}
0 &\text{if}& r \leq \frac{c}{2}, \\
1 &\text{if}& r \geq c,
\end{array}
\right.
\quad \vartheta'(r) \leq 1 \text{ for any } r\geq 0,
\]
where $c>0$ is a given constant. For $R>1$, we denote the radial function $\psi_R(x) = \psi_R(r) := \vartheta(r/R)$. We plug this function in the virial quantity $V_{\chi}$ defined in \eqref{vir-1}, and by the fundamental theorem of calculus, and by assuming that $\sup_{t\in [0,\infty)} \| u(t)\|_{\dot H^1} <\infty$, we have
	\[
	V_{\psi_R}(t) = V_{\psi_R}(0) + \int_0^t V^\prime_{\psi_R}(s) ds \leq V_{\psi_R}(0) + t\sup_{s\in [0,t]} |V^\prime _{\psi_R}(s)| \leq V_{\psi_R}(0) + CR^{-1} t.
	\]
By the choice of $\vartheta$, we have $V_{\psi_R}(0) = o_R(1)$ as $R\to\infty$. 
Since $V_{\psi_R}(t)\geq \int_{|x| \geq cR} |u(t,x)|^2 dx $, we obtain the  control on $L^2$-norm of the solution outside a large ball as in \eqref{est-L2-norm}.  By repeating the estimates as in \autoref{subsec:sca},  and by means of the  Gagliardo-Nirenberg interpolation inequality applied to \eqref{est-A-R}, we see that
	\begin{align} \label{est-z-R-appl}
	V^{\prime\prime}_{\varphi_R}(t) \lesssim  G(u(t)) +\left( R^{-1} + \|u(t)\|^{1/2}_{L^2(|x| \gtrsim R)}+\|u(t)\|_{L^2(|x| \gtrsim R)}\right).
	\end{align}
 
	Combining \eqref{est-L2-norm} and \eqref{est-z-R-appl}, we obtain that 
	for any $\eta \in (0,1)$, there exists a constant $C>0$ independent of $R$ such that for any $t\in [0,T]$ with $T:= \frac{\eta R}{C}$ such that
	\[
	V^{\prime\prime}_{\varphi_R}(t) \lesssim  G(u(t)) + \left(\left(\eta + o_R(1)\right)^{1/4}+\left(\eta + o_R(1)\right)^{1/2}\right)
	\]
	By the assumption \eqref{blow-crite}, we choose $\eta>0$ sufficiently small and $R>1$ sufficiently large to have
$V^{\prime\prime}_{\varphi_R}(t) \lesssim -\delta <0$
	for all $t\in [0,T]$. If we integrate  in time twice from $0$ to $T$, we get $ V^{\prime\prime}_{\varphi_R}(T) \leq o_R(1) R^2 - \frac{\delta \eta^2}{2C^2} R^2,$
	and by choosing  $R$ large enough, we obtain $z_{\varphi_R}(T) \leq -\frac{\delta \eta^2}{4C}R^2 <0,$ a contradiction with respect to the fact that $z_{\varphi_R}(T)$ is a nonnegative quantity.

\section{Sketch of the proofs above the threshold}
The dynamics above the threshold is a consequence of the following general theorem, where a sufficient condition to have global existence and scattering is given. It will be used to establish the asymptotic dynamics when the initial datum lies at the threshold as well (see later on, specifically see the proofs in Section 5). 	
	\begin{theorem} \label{theo-scat-crite}
		Let $\lambda_1$ and $\lambda_2$ satisfy \eqref{cond-GW}. Let $Q$ be a ground state related to \eqref{ell-equ}. Let $u(t)$ be a $H^1$-solution to \eqref{GP} defined on the maximal forward time interval $[0,T_{max})$. Assume that
		\begin{align} \label{scat-crite}
		\sup_{t\in [0,T_{max})} -P(u(t)) M(u(t)) < -P(Q) M(Q).
		\end{align}
		Then $T_{max}=\infty$ and the solution $u(t)$ scatters in $H^1$ forward in time.
	\end{theorem}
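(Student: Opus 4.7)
The plan is to run the same concentration/compactness and rigidity scheme sketched for \autoref{theo-scat-BF} in \autoref{subsec:sca}, with the below-threshold condition \eqref{sc-reg} replaced by the direct a priori bound \eqref{scat-crite}. The reduction rests on the observation that \eqref{scat-crite} already encodes, via the sharp Gagliardo-Nirenberg inequality, both the uniform $H^1$-bound and the uniform strict positivity of the Pohozaev functional $G$ that are usually extracted from \eqref{sc-reg}, so that all the subsequent steps in \autoref{subsec:sca} go through with only notational changes.

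First I would establish global existence. Conservation of mass and energy yields $H(u(t))=2E(u_0)+(-P(u(t)))$, and \eqref{scat-crite} bounds $-P(u(t))$ above by $(-P(Q))M(Q)/M(u_0)$ on $[0,T_{max})$. This is a uniform $H^1$-bound, forcing $T_{max}=\infty$ via the blow-up alternative. Next I would upgrade \eqref{scat-crite} to a uniform coercivity of $G$. Setting $y(t):=-P(u(t))M(u_0)$, the Gagliardo-Nirenberg inequality \eqref{GN-ineq} combined with the identities \eqref{inde-quant-proof} yields
\[
G(u(t))\,M(u_0)\;\geq\;\Phi(y(t)),\qquad \Phi(y):=(y/C_{GN})^{2/3}-\tfrac{3}{2}y,
\]
and a short one-variable analysis shows that $\Phi$ vanishes exactly at the endpoint $y_*:=-P(Q)M(Q)=\tfrac{8}{27}C_{GN}^{-2}$, is strictly positive on $(0,y_*)$, and is strictly decreasing at $y_*$. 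The strict inequality \eqref{scat-crite} then provides some $\delta>0$ with $y(t)\leq(1-\delta)y_*$ for all $t\geq 0$, and hence $\inf_{t\geq 0}G(u(t))\geq \alpha:=\Phi((1-\delta)y_*)/M(u_0)>0$.

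With these two ingredients in hand I would proceed as in \autoref{subsec:sca}: if scattering were to fail, the small-data theory together with the linear profile decomposition of \cite{BF19} and the existence of the nonlinear wave operator for \eqref{GP} would produce a non-trivial minimal non-scattering solution $u_{crit}$ that still satisfies \eqref{scat-crite}, and hence the coercivity $G(u_{crit}(t))\geq \alpha>0$, with an $H^1$-trajectory precompact modulo a translation $x(t)$ satisfying $|x(t)|=o(t)$ as $t\to\infty$. Plugging $u_{crit}$ into the localized virial identity \eqref{vir-3}--\eqref{rem-eps1}, controlling the non-local contribution $\Lambda$ by the point-wise Riesz estimates of \autoref{lemma:in-out} and \autoref{lemma:out-in}, and choosing $R$ larger than $\sup_{[T_0,T_1]}|x(t)|$ plus the precompactness radius produces $\tfrac{d^2}{dt^2}V_{\chi_R}(t)\geq 2\alpha$ on arbitrarily long time intervals. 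Integrating twice and invoking the sub-linearity of $x(t)$ then contradicts the boundedness of $\tfrac{d}{dt}V_{\chi_R}$, ruling out $u_{crit}$ and establishing scattering.

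The main obstacle is the extraction of $u_{crit}$: one has to rerun the profile-decomposition and wave-operator argument of \cite{BF19} with the ``critical size'' now measured by $\sup_t\bigl(-P(u)\bigr)M(u)$ rather than by $E(u)M(u)$, and to verify that both the bound \eqref{scat-crite} and the sign $P<0$ guaranteed by the Restricted Unstable Regime \eqref{cond-GW} are inherited by the minimal non-scattering profile; without the latter, $P$ could change sign along the sequence of profiles and the coercivity step above would collapse.
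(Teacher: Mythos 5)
Your framework matches the paper's: both run a Kenig--Merle concentration/compactness and rigidity scheme with the below-threshold assumption replaced by the dynamic bound \eqref{scat-crite}, and you correctly isolate the two key ingredients (global existence and coercivity of $G$, plus extraction of a critical element). However, there are two genuine gaps.

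First, the one-variable coercivity argument does not work as written. You have $G(u(t))M(u_0)\geq\Phi(y(t))$ with $y(t):=-P(u(t))M(u_0)$ and $\Phi(y)=(y/C_{GN})^{2/3}-\tfrac32 y$, and you correctly compute $\Phi(0)=\Phi(y_*)=0$ with $\Phi>0$ on $(0,y_*)$. But $\Phi$ is concave with its interior maximum at $\tfrac{64}{729}C_{GN}^{-2}<y_*$ and vanishes at the origin, so the hypothesis $y(t)\leq(1-\delta)y_*$ does \emph{not} imply $\Phi(y(t))\geq\Phi((1-\delta)y_*)$ --- if $y(t)$ drifts toward $0$ then $\Phi(y(t))\to 0$ and your proposed $\alpha$ is not a lower bound. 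What the bound does give is the \emph{strict positivity} $G(u(t))>0$ pointwise in time (using $P(u(t))<0$ under \eqref{cond-GW}), and the uniform lower bound along the critical trajectory is then recovered from precompactness: if $G(u_{crit}(t_n))\to 0$, a subsequential $H^1$-limit $v\neq 0$ (mass conservation plus compactness) would satisfy $G(v)=0$ while still obeying $-P(v)M(v)<y_*$, contradicting $\Phi>0$ on $(0,y_*)$. So the coercivity needed in the rigidity step is correct but must be obtained this way, not directly from monotonicity of $\Phi$.

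Second, and more substantially, the extraction of $u_{crit}$ is not merely a ``notational change.'' You correctly flag this as the main obstacle, but the paper's point is precisely that it requires a new mechanism: since initial data are no longer assumed below the Mass-Energy threshold, the nonlinear profiles appearing in the profile decomposition cannot be declared global and scattering by an induction on $E(u_0)M(u_0)$. The paper circumvents this by proving a \emph{nonlinear} profile decomposition along bounded nonlinear flows (cf.\ \cite[Lemma 3.1]{DFH}, inspired by \cite{Guevara}), and the restriction to the Restricted Unstable Regime \eqref{cond-GW}, which guarantees $P<0$ along the evolution, is used to control the nonlinear profiles in this modified scheme. Your proposal merely lists ``verify inheritance of \eqref{scat-crite} and $P<0$ by the minimal profile'' as a to-do, but this inheritance is exactly the nontrivial content that the bounded-nonlinear-flow profile decomposition supplies. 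Without it, the construction of the minimal non-scattering element does not close.
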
\medskip
The proof of the Theorem above is done by employing a concentration/compactness and rigidity road map, as for the case below the threshold, see \autoref{theo-scat-BF}. As mentioned in the paragraph before \autoref{lemcri}, the main tool to prove existence of global and non-scattering solution is given by a profile decomposition theorem, which is a linear statement; so, in order to construct non-linear profiles, the existence of wave operator is used. Moreover, when we are in the case below the threshold, such a non-linear profiles can be proved to be global and scattering. When we do not assume initial data below the threshold, such a control on the non-linear profiles cannot be given. Nonetheless, we are able to prove a non-linear profile decomposition theorem along bounded non-linear flows, which overcome the lack of finiteness of the scattering norm of the non-linear profiles. See \cite[Lemma 3.1]{DFH}. The latter result in \cite{DFH} was inspired to \cite{Guevara}, where the NLS case was treated. We recall also here (as we remarked it in the Introduction) that the restriction to the region \eqref{cond-GW} is imposed to guarantee the negative sign of the potential energy, which is fundamental to get the right bounds on the non-linear profiles constructed when running a Kenig-Merle scheme. \\

\textit{Proof of  \autoref{theo-scat-above}}. Let $u_0 \in \Sigma$ satisfying  all the conditions in \eqref{above:sca}. We will show that \eqref{scat-crite} holds true, which in turn  implies the result, by means of  \autoref{theo-scat-crite}. The strategy is  in the spirit of Duyckaerts and Roudenko \cite{DR2}, and it is done in three steps, and it is based on an ODE argument.  We summarize the main steps by just explaining how the method works, and by defining the basic objects. For a comprehensive proof we refer the reader to \cite{DFH}, where all the details are given. \\

By easy computations, we have 	
\begin{align} \label{iden-N-H}
	-P(u(t)) = 4E(u) - V''(t), \quad H(u(t)) = 6E(u) - V''(t),
\end{align}
and by using that $P(u(t))$ is negative (recall that we are working in $RUR$), then $V''(t) \leq 4E(u)$. At this point we recall, see \cite{GW}, that for any $ f\in \Sigma$
\begin{align} \label{est-GW}
\left(\Im \int  x\cdot \nabla f(x) \overline{f}(x)dx \right)^2 \leq \|x f\|^2_{L^2} \left( H(f) - \frac{(-P(f))^{\frac{2}{3}}}{(C_{GN})^{\frac{2}{3}} (M(f))^{\frac{1}{3}}} \right). 
\end{align}
By plugging  \eqref{iden-N-H} into \eqref{est-GW}, we have 
\begin{align*}
\left(\frac{V'(t)}{2}\right)^2 \leq V(t) \left[ 6E(u)-V''(t) - \frac{(4E(u)-V''(t))^{\frac{2}{3}}}{(C_{GN})^{\frac{2}{3}} (M(u))^{\frac{1}{3}}} \right]
\end{align*}
We introduce the function $z(t):= \sqrt{V(t)}$,  and  we define 
$ h(\zeta):= 6E(u)-\zeta-\frac{(4E(u)-\zeta)^{\frac{2}{3}}}{(C_{GN})^{\frac{2}{3}} (M(u))^{\frac{1}{3}}}$ for $\zeta \leq 4E(u)$. 
We can now rewrite the estimate above as  $(z'(t))^2 \leq h(V''(t))$.	The function $h(\zeta)$ on the unbounded interval $(-\infty, 4E(u))$ has   a minimum in  $\zeta_0$ defined through  
\begin{equation*} \label{defi-lambda-0}
	1= \frac{2(4E(u)-\zeta_0)^{-\frac{1}{3}}}{3 (C_{GN})^{\frac{2}{3}} (M(u))^{\frac{1}{3}}},
\end{equation*}
and in particular $h(\zeta_0) = \zeta_0/2$. The precise expression for $C_{GN}$ given in \eqref{inde-quant-proof}, yields to 
\begin{align} \label{iden-lambda-0}
\frac{E(u) M(u)}{E(Q) M(Q)} \left(1-\frac{\zeta_0}{4E(u)}\right) =1.
\end{align}

\noindent \textup{(i)} By using the previous relations, the first point of the ODE argument consists in rewriting the scattering conditions in \eqref{above:sca} in an alternative way, by using the functions $z(t)$, $V(t)$, $h(\zeta)$, and the value $\zeta_0$.  
From the hypothesis that $M(u_0)E(u_0)\geq M(Q)E(Q)$, we get  that  \eqref{iden-lambda-0} is equivalent to $\zeta_0 \geq 0.$  The  second condition in \eqref{above:sca} is equivalent to  
\begin{equation*} \label{cond-2-above-equi}
	(z'(0))^2 \geq \frac{\zeta_0}{2} = h(\zeta_0),
\end{equation*}
while   the third condition in \eqref{above:sca} is equivalent to $V''(0) >\zeta_0$. The last condition in \eqref{above:sca} is instead equivalent to $z'(0) \geq 0$.\medskip

\noindent\textup{(ii)} The previous conditions replacing the ones in \eqref{above:sca}, jointly with a continuity argument, yield to a lower bound  
\begin{equation}\label{claim-above}
V''(t)\geq  \zeta_0 + \delta_0,
\end{equation} 
for some $\delta_0>0$ and for any $t\in[0, T_{max})$. \medskip

\noindent\textup{(iii)}  Eventually, we are able to prove \eqref{scat-crite}. It follows from \eqref{claim-above} and by using  that $\zeta_0\geq0$, \eqref{iden-lambda-0}, and \eqref{inde-quant-proof},  that
	\begin{align*}
	-P(u(t)) M(u(t)) &= (4 E(u) - V''(t)) M(u) \leq (4E(u) -\zeta_0 -\delta_0) M(u) \\
	&\leq 4 E(Q) M(Q) - \delta_0 M(u)  = -(1-\eta) P(Q) M(Q) 
	\end{align*}
	for all $t\in [0,T_{max})$, where $\eta:= \frac{\delta_0M(u)}{4E(Q) M(Q)}>0$. This shows \eqref{scat-crite} and we can conclude the proof of  \autoref{theo-scat-above}.

\section{Sketch of the proofs at the threshold}

We now consider the threshold case, i.e. when the initial data  satisfy \eqref{cond-ener-at}, and we give an overview on the proof of \autoref{theo-dyna-at}. Firstly, let us observe that in \eqref{cond-ener-at} we can assume, by scaling invariance, that $M(u_0) = M(Q)$ and  $E(u_0) = E(Q)$. We continue with the proof of the three points in order. \medskip

\noindent \textup{(i)} As we are considering $M(u_0) = M(Q)$ and  $E(u_0) = E(Q)$, we see that  \eqref{cond-scat-at} becomes $H(u_0)<H(Q)$. Then in order to prove that \eqref{cond-ener-at} and \eqref{cond-scat-at} imply that the solution is global, it is enough  to prove that the kinetic energy remains bounded by $H(Q)$ (by the blow-up alternative). By the absurd, if we assume that there exists a time $\tau$ in the lifespan of the solution such that  $H(u(\tau))=H(Q)$, then we obtain by definition of the energy, that $- P(u(\tau)) =H(u(\tau))- 2E(u(\tau)) = H(Q) - 2 E(Q) =- P(Q).$ Namely  $u(\tau)$ is an optimizer of \eqref{GN-ineq}. A  Lions' concentration-compactness type-lemma, see \cite[Lemma 5.1]{DFH}, implies  that $u(t)$ is a (rescaling) of a ground state related to \eqref{ell-equ} multiplied by a (time dependent) phase shift. This yields to a contradiction with respect to the hypothesis, as we would have $H(u_0) M(u_0)=H(Q) M(Q)$;   therefore, by the blow-up alternative, $u(t)$ is globally defined. \medskip

Under the hypothesis that the coefficients $\lambda_1$ and $\lambda_2$ satisfy \eqref{cond-GW}, then we are able to prove that we have the result in the second part of \autoref{theo-dyna-at} \textup{(i)}, by distinguishing two cases.  \medskip

\noindent We firstly suppose that $\sup_{t\in [0,\infty)} H(u(t)) < H(Q)$. This means that  there exists $\varepsilon>0$ such that for all $t\in [0,\infty)$ (the solutions is global), $	H(u(t)) \leq (1-\varepsilon) H(Q).$ By plugging the best constant (given in term of the ground state to \eqref{ell-equ}) of the Gagliardo-Sobolev type estimate {\color{red} CITE}, it is straightforward to see that 
\[
-P(u(t)) M(u(t)) \leq C_{GN} \left(H(u(t)) M(u(t))\right)^{\frac{3}{2}} \leq-(1-\varepsilon)^{\frac{3}{2}} P(Q) M(Q) 
\]
hence the condition \eqref{scat-crite} of  \autoref{theo-scat-crite} holds true, and the solution scatters forward in time.  \medskip

\noindent  If instead $\sup_{t\in[0,\infty)} H(u(t)) = H(Q)$, then there exists a time sequence $(t_n)_{n \geq 1} \subset [0,\infty)$ such that
	\[
	M(u(t_n)) = M(Q), \quad E(u(t_n))= E(Q), \quad \lim_{n\rightarrow \infty} H(u(t_n)) = H(Q).
	\]
Moreover, $t_n\to \infty$. Indeed, if (up to subsequences) $t_n \rightarrow \tau$, as $u(t_n) \rightarrow u(\tau)$ strongly in $H^1$, then it can be shown that $u(\tau)$ is an optimizer for \eqref{GN-ineq}. Arguing as above, we have a contradiction. A Lions-type lemma \cite[Lemma 5.1]{DFH} gives the desired result.\medskip

\noindent \textup{(ii)} We continue with the proof of the second point. Suppose the initial datum satisfies \eqref{cond-ener-at} and \eqref{cond-at}. By scaling, we reduce to the case $
	M(u_0) = M(Q)$, $ E(u_0) = E(Q)$, hence $ H(u_0) = H(Q)$. Hence $u_0$ is an optimizer for \eqref{GN-ineq}. This shows that $u_0(x) = e^{i\theta} \mu \tilde{Q}(\mu x)$ for some $\theta \in \R,$  $\mu>0$ and $\tilde Q$ a ground state related to \eqref{ell-equ}. By the uniqueness of solutions, we end-up with $u(t,x) = e^{i\mu^2t} e^{i\tilde{\theta}} \mu \tilde{Q}(\mu x)$ for some $\tilde{\theta}\in \R$.\medskip

\noindent	\textup{(iii)} Finally, suppose that $u_0 \in H^1$ satisfies \eqref{cond-ener-at} and \eqref{cond-blow-at}. By scaling we have reduced \eqref{cond-blow-at} to $H(u_0)> H(Q)$. By the same argument in the proof of the first point, we claim that $H(u(t))> H(Q)$, for every time in the lifespan of the solution.  If the maximal time of existence is finite there is nothing to prove. Otherwise, if the solution exists for all times, we separate the analysis in two   cases.\medskip
	
\noindent Suppose $\sup_{t\in [0,\infty)} H(u(t)) > H(Q)$.  Hence there exists $\varepsilon>0$ such that for all $t\in [0,\infty)$,
$H(u(t)) \geq (1+\eta) H(Q)$. By using the definition  \eqref{def:G} of $G$ and the previous property, we have  \[
	G(u(t)) M(u(t)) \leq 3 E(Q) M(Q) - \frac{1}{2} (1+\eta) H(Q) M(Q) = -\frac{\eta}{2}H(Q) M(Q)<0,
	\]
	for all $t\in [0,\infty),$ where in the last equality we used \eqref{inde-quant-proof}. By applying  \autoref{theo-blow-crite}, we finish the proof.\medskip
	
\noindent If instead $\sup_{t\in [0,\infty)} H(u(t)) = H(Q)$, similarly to above we have that  there exist a diverging  sequence of times $\{t_n\}$, a ground state $\tilde{Q}$ related to \eqref{ell-equ}, and a sequence $\{y_n\}_{n\geq 1} \subset \R^3$ such that $ u(t_n, \cdot +y_n) \rightarrow e^{i\theta} \mu \tilde{Q}(\mu \cdot)$ in $H^1$, for some $\theta \in \R$ and $\mu>0$ as $n\rightarrow \infty$. This conclude the proof of \autoref{theo-dyna-at}.

\subsection*{Acknowledgements}
\noindent J.B. was partially supported by ``Problemi stazionari e di evoluzione nelle equazioni di campo nonlineari dispersive'' of GNAMPA 2020.
L.F. was supported by the EPSRC New Investigator Award (grant no. EP/S033157/1).


\begin{bibdiv}
\begin{biblist}

\bib{AkN}{article}{
   author={Akahori, Takafumi},
   author={Nawa, Hayato},
   title={Blowup and scattering problems for the nonlinear Schr\"{o}dinger
   equations},
   journal={Kyoto J. Math.},
   volume={53},
   date={2013},
   number={3},
   pages={629--672},
   issn={2156-2261},
}
\bib{AEMWC}{article}{
   author={Anderson, M.H.},
   author={Ensher, J.R.},
    author={Matthews, M.R.},
   author={Wieman, C.E.},
   author={Cornell, E.A.},
   title={Observation of Bose-Einstein Condensation in a Dilute Atomic Vapor},
   journal={Science},
   volume={269},
   date={1995},
   number={5221},
   pages={198--201}, 
}

\bib{AS}{article}{
   author={Antonelli, Paolo},
   author={Sparber, Christof},
   title={Existence of solitary waves in dipolar quantum gases},
   journal={Phys. D},
   volume={240},
   date={2011},
   number={4-5},
   pages={426--431},
   issn={0167-2789},
}

\bib{ADF}{article}{
   author={Ardila, {A. H.}},
   author={Dinh, {V. D.}},
   author={Forcella, {L.}},
   title={Sharp conditions for scattering and blow-up for a system of NLS arising in optical materials with $\chi^3$ nonlinear response},
   journal={Communications in Partial Differential Equations},
   volume={46},
   date={2021},
  number={11},
   pages={2134--2170},
}

\bib{BaCa}{article}{
   author={Bao, W.},
   author={Cai, Y.},
   title={Mathematical Theory and Numerical methods for Bose-Einstein condensation},
   journal={Kinetic and Related Models AMS},
   volume={6},
   date={2013},
   number={1},
   pages={1--135},
}

\bib{BF19}{article}{
   author={Bellazzini, Jacopo},
   author={Forcella, Luigi},
   title={Asymptotic dynamic for dipolar quantum gases below the ground
   state energy threshold},
   journal={J. Funct. Anal.},
   volume={277},
   date={2019},
   number={6},
   pages={1958--1998},
   issn={0022-1236},
}

\bib{BF21}{article}{
   author={Bellazzini, {J.}},
   author={Forcella, {L.}},
   title={Dynamical collapse of cylindrical symmetric dipolar Bose-Einstein condensates},
   journal={Calc. Var.},
   volume={60},
   date={2021},
   number={229},
   pages={1--33},
}

\bib{BFG-21}{article}{
   author={Bellazzini, {J.}},
   author={Forcella, {L.}},
   author={Georgiev, {V.}},
   title={Ground state energy threshold and blow-up for NLS with competing nonlinearities},
   journal={arXiv:2012.10977 [math.AP]},
}

\bib{BJ}{article}{
   author={Bellazzini, Jacopo},
   author={Jeanjean, Louis},
   title={On dipolar quantum gases in the Unstable Regime},
   journal={SIAM J. Math. Anal.},
   volume={48},
   date={2016},
   number={3},
   pages={2028--2058},
   issn={0036-1410},
}	

\bib{BrSaToHu}{article}{
   author={Bradley, C.C.},
   author={Sackett, C. A.},
   author={Tolett, J.J.},
   author={Hulet, R.J.},
   title={Evidence of Bose-Einstein Condensation in an Atomic Gas with Attractive Interaction},
   journal={Phys. Rev. Lett. },
   volume={75},
   date={1995},
   pages={1687--1690},
}		

\bib{CMS}{article}{
   author={Carles, R\'emi},
   author={Markowich, Peter A.},
   author={Sparber, Christof},
   title={On the Gross-Pitaevskii equation for trapped dipolar quantum
   gases},
   journal={Nonlinearity},
   volume={21},
   date={2008},
   number={11},
   pages={2569--2590},
   issn={0951-7715},
}

\bib{CFX}{article}{
   author={Fang, DaoYuan},
   author={Xie, Jian},
   author={Cazenave, Thierry},
   title={Scattering for the focusing energy-subcritical nonlinear
   Schr\"{o}dinger equation},
   journal={Sci. China Math.},
   volume={54},
   date={2011},
   number={10},
   pages={2037--2062},
   issn={1674-7283},
}

\bib{DMAVDKK}{article}{
   author={Davis, Kendall B.},
   author={Mewes, M.-O.},
   author={Andrews, Michael R.},
   author={Van Druten, N.J.},
   author={Durfee, D.S.},
   author={Kurn, D.M.},
   author={Ketterle, Wolfgang},
   title={Bose-Einstein condensation in a gas of sodium atoms},
   journal={Physical Review Letters},
   volume={75},
   date={1995},
   number={22},
   pages={3639}
}

\bib{DFH}{article}{
   author={Dinh, Van Duong},
   author={Forcella, Luigi},
   author={Hajaiej, Hichem},
   title={Mass-energy threshold dynamics for dipolar Quantum Gases},
   journal={Communications in Mathematical Sciences, to appear. arXiv:2009.05933 [math.AP]},
}
\bib{DF-ZAMP}{article}{
   author={Dinh, Van Duong},
   author={Forcella, Luigi},
   title={Blow-up results for systems of nonlinear Schr\"{o}dinger equations
   with quadratic interaction},
   journal={Z. Angew. Math. Phys.},
   volume={72},
   date={2021},
   number={5},
   pages={Paper No. 178},
   issn={0044-2275},
}

\bib{DWZ}{article}{
   author={Du, Dapeng},
   author={Wu, Yifei},
   author={Zhang, Kaijun},
   title={On blow-up criterion for the nonlinear Schr\"{o}dinger equation},
   journal={Discrete Contin. Dyn. Syst.},
   volume={36},
   date={2016},
   number={7},
   pages={3639--3650},
   issn={1078-0947},
}
		
\bib{DHR}{article}{
   author={Duyckaerts, Thomas},
   author={Holmer, Justin},
   author={Roudenko, Svetlana},
   title={Scattering for the non-radial 3D cubic nonlinear Schr\"odinger
   equation},
   journal={Math. Res. Lett.},
   volume={15},
   date={2008},
   number={6},
   pages={1233--1250},
   issn={1073-2780},
}
\bib{DR1}{article}{
   author={Duyckaerts, Thomas},
   author={Roudenko, Svetlana},
   title={Threshold solutions for the focusing 3D cubic Schr\"{o}dinger
   equation},
   journal={Rev. Mat. Iberoam.},
   volume={26},
   date={2010},
   number={1},
   pages={1--56},
   issn={0213-2230},
}
\bib{DR2}{article}{
   author={Duyckaerts, {T.}},
   author={Roudenko, {S.}},
   title={Going beyond the threshold: scattering and blow-up in the focusing
   NLS equation},
   journal={Comm. Math. Phys.},
   volume={334},
   date={2015},
   number={3},
   pages={1573--1615},
   issn={0010-3616},
}

\bib{FGG}{article}{
   author={Ferrero, Alberto},
   author={Gazzola, Filippo},
   author={Grunau, Hans-Christoph},
   title={Decay and eventual local positivity for biharmonic parabolic
   equations},
   journal={Discrete Contin. Dyn. Syst.},
   volume={21},
   date={2008},
   number={4},
   pages={1129--1157},
   issn={1078-0947},
}
\bib{FV}{article}{
   author={Forcella, Luigi},
   author={Visciglia, Nicola},
   title={Double scattering channels for 1D NLS in the energy space and its
   generalization to higher dimensions},
   journal={J. Differential Equations},
   volume={264},
   date={2018},
   number={2},
   pages={929--958},
   issn={0022-0396},
}
\bib{GW}{article}{
   author={Gao, Y.},
   author={Wang, Z.},
   title={Blow-up for trapped dipolar quantum gases with large energy},
   journal={J. Math. Phys.},
   volume={60},
   date={2019},
   number={12},
   pages={121501, 10},
   issn={0022-2488},
}

\bib{Gla}{article}{
   author={Glassey, R. T.},
   title={On the blowing up of solutions to the Cauchy problem for nonlinear
   Schr\"odinger equations},
   journal={J. Math. Phys.},
   volume={18},
   date={1977},
   number={9},
   pages={1794--1797},
   issn={0022-2488},
}
\bib{Guevara}{article}{
   author={Guevara, Cristi Darley},
   title={Global behavior of finite energy solutions to the $d$-dimensional
   focusing nonlinear Schr\"{o}dinger equation},
   journal={Appl. Math. Res. Express. AMRX},
   date={2014},
   number={2},
   pages={177--243},
   issn={1687-1200},
}

\bib{HR07}{article}{
   author={Holmer, Justin},
   author={Roudenko, Svetlana},
   title={On blow-up solutions to the 3D cubic nonlinear Schr\"{o}dinger
   equation},
   note={[Issue information previously given as no. 1 (2007)]},
   journal={Appl. Math. Res. Express. AMRX},
   date={2007},
   pages={Art. ID abm004, 31},
   issn={1687-1200},
}

\bib{HR2010}{article}{
   author={Holmer, {J.}},
   author={Roudenko, {S.}},
   title={Divergence of infinite-variance nonradial solutions to the 3D NLS
   equation},
   journal={Comm. Partial Differential Equations},
   volume={35},
   date={2010},
   number={5},
   pages={878--905},
   issn={0360-5302},
}

\bib{HR}{article}{
   author={Holmer, {J}.},
   author={Roudenko, {S.}},
   title={A sharp condition for scattering of the radial 3D cubic nonlinear
   Schr\"odinger equation},
   journal={Comm. Math. Phys.},
   volume={282},
   date={2008},
   number={2},
   pages={435--467},
   issn={0010-3616},
}

\bib{Kav}{article}{
   author={Kavian, O.},
   title={A remark on the blowing-up of solutions to the Cauchy problem for
   nonlinear Schr\"{o}dinger equations},
   journal={Trans. Amer. Math. Soc.},
   volume={299},
   date={1987},
   number={1},
   pages={193--203},
   issn={0002-9947},
}
	
\bib{KM1}{article}{
   author={Kenig, Carlos E.},
   author={Merle, Frank},
   title={Global well-posedness, scattering and blow-up for the
   energy-critical, focusing, nonlinear Schr\"odinger equation in the radial
   case},
   journal={Invent. Math.},
   volume={166},
   date={2006},
   number={3},
   pages={645--675},
   issn={0020-9910},
}
\bib{KM2}{article}{
   author={Kenig, {C. E.}},
   author={Merle, Frank},
   title={Global well-posedness, scattering and blow-up for the
   energy-critical focusing non-linear wave equation},
   journal={Acta Math.},
   volume={201},
   date={2008},
   number={2},
   pages={147--212},
   issn={0001-5962},
}

\bib{KRRT}{article}{
   author={Kuznetsov, E. A.},
   author={Rasmussen, J. Juul},
   author={Rypdal, K.},
   author={Turitsyn, S. K.},
   title={Sharper criteria for the wave collapse},
   note={The nonlinear Schr\"{o}dinger equation (Chernogolovka, 1994)},
   journal={Phys. D},
   volume={87},
   date={1995},
   number={1-4},
   pages={273--284},
   issn={0167-2789},
}

\bib{LMS}{article}{
   author={Lahaye, T.},
   author={Menotti, C.},
   author={Santos, L.},
   author={Lewenstein, M.},
   author={Pfau, T.},
   title={The physics of dipolar bosonic quantum gases},
   journal={Reports on Progress in Physics},
   volume={72}
   date={2009},
   number={12}
   pages={126401},
}

\bib{LW}{article}{
   author={Lu, J.},
   author={Wu, Y.},
   title={Sharp threshold for scattering of a generalized Davey-Stewartson system in three dimension},
   journal={Comm. Pure Appl. Anal.},
   date={2015},
   number={14},
   pages={ 1641--1670},
}
\bib{Mar}{article}{
   author={Martel, Yvan},
   title={Blow-up for the nonlinear Schr\"{o}dinger equation in nonisotropic
   spaces},
   journal={Nonlinear Anal.},
   volume={28},
   date={1997},
   number={12},
   pages={1903--1908},
   issn={0362-546X},
}

\bib{NaPeSa}{article}{
   author={Nath, R.},
   author={Pedri, P.},
   author={Zoller, P.},
   author={Lewenstein, M.},
   title={Soliton-soliton scattering in dipolar Bose-Einstein condensates},
   journal={Phys. Rev. A},
   date={2007},
   number={76},
   pages={ 013606--013613},
}

\bib{OT}{article}{
   author={Ogawa, Takayoshi},
   author={Tsutsumi, Yoshio},
   title={Blow-up of $H^1$ solution for the nonlinear Schr\"odinger equation},
   journal={J. Differential Equations},
   volume={92},
   date={1991},
   number={2},
   pages={317--330},
   issn={0022-0396},
}
\bib{PS}{book}{
   author={Pitaevskii, Lev},
   author={Stringari, Sandro},
   title={Bose-Einstein condensation},
   series={International Series of Monographs on Physics},
   volume={116},
   publisher={The Clarendon Press, Oxford University Press, Oxford},
   date={2003},
   pages={x+382},
   isbn={0-19-850719-4},
}
\bib{SSZL}{article}{
   author={Santos, L.},
   author={Shlyapnikov, G.},
   author={Zoller, P.},
   author={Lewenstein, M.},
   title={Bose-Einstein condensation in trapped dipolar gases},
   journal={Phys. Rev. Lett.},
   date={2000},
   number={85},
   pages={ 1791--1797},
}
\bib{YY1}{article}{
   author={Yi, S.},
   author={You, L.},
   title={Trapped atomic condensates with anisotropic interactions},
   journal={Phys. Rev. A},
   volume={61}
   date={2000},
   number={4}
   pages={041604},
}
\bib{YY2}{article}{
   author={Yi, {S.}},
   author={You, {L.}},
   title={Trapped condensates of atoms with dipole interactions},
   journal={Phys. Rev. A},
   volume={63}
   date={2001},
   number={5}
   pages={053607},
}

\end{biblist}
\end{bibdiv}

\end{document}